\definecolor{erica}{rgb}{0.69, 0.19, 0.38}
\definecolor{anna}{rgb}{0.13, 0.55, 0.13}
\definecolor{bv}{rgb}{0.54, 0.17, 0.89}
\definecolor{lightgray}{rgb}{0.83, 0.83, 0.83}
\newtheorem{theorem}{\bf Theorem}[section]
\newtheorem{lemma}[theorem]{\bf Lemma}
\newtheorem{definition}[theorem]{\bf Definition}
\newtheorem{remark}[theorem]{\bf Remark}
\newtheorem{proposition}[theorem]{ Proposition}
\def \k {{\kappa}}
\def \d {{\delta}}
\def \L {\mathscr{L}}
\def \K {\mathscr{K}}
\def \R {{\mathbb {R}}}
\def \N {{\mathbb {N}}}
\def \H {\mathcal{Q}}
\def \x {{\xi}}
\def \e {\varepsilon}
\def \t {{\tau}}
\def \z {{\zeta}}
\def \tilde {\widetilde}
\def\p{\partial}
\def \P {{\cal{P}}}
\def \rnn {{\mathbb {R}}^{N+1}}
\def \OO {\mathbb{O}}
\def \P {\mathcal{P}}
\def \r {\varrho}
\def \ip {\frac{1}{p}}
\def \meas {{\text{\rm meas}}}
\begin{document}
\title{\bf Pointwise estimates for degenerate \\ Kolmogorov equations with $L^p$-source term}

\author{\sc{Erica Ipocoana}\thanks{Dipartimento di Matematica Pura e Applicata, Universit\`{a} di Modena e Reggio Emilia, via Campi 213/b, 41115 Modena (Italy). E-mail: erica.ipocoana@unipr.it}  \;\textrm{and} 
\sc{Annalaura Rebucci}\thanks{Dipartimento di Scienze Matematiche, Fisiche e Informatiche,
Universit\`{a} di Parma, Parco Area delle Scienze 53/A, 43124 Parma (Italy). E-mail: annalaura.rebucci@unipr.it}}

\maketitle
\begin{abstract}
The aim of this paper is to establish new pointwise regularity results for solutions to degenerate second order partial differential equations with a Kolmogorov-type operator of the form
\begin{equation*} 
\L :=\sum_{i,j=1}^m \partial^2_{x_i x_j } +\sum_{i,j=1}^N b_{ij}x_j\partial_{x_i}-\partial_t,
\end{equation*}
where $(x,t) \in \R^{N+1}$, $1 \le m \le N$ and the matrix $B:=(b_{ij})_{i,j=1,\ldots,N}$ has real constant entries. In particular, we show that if the modulus of $L^p$-mean oscillation of $\L u$ at the origin is Dini, then the origin is a Lebesgue point of continuity in $L^p$ average for the second order derivatives $\partial^2_{x_i x_j} u$, $i,j=1,\ldots,m$, and the Lie derivative $\left(\sum_{i,j=1}^N b_{ij}x_j\partial_{x_i}-\partial_t\right)u$. Moreover, we are able to provide a Taylor-type expansion up to second order with estimate of the rest in $L^p$ norm. The proof is based on decay estimates, which we achieve by contradiction, blow-up and compactness results.
\end{abstract}

\medskip
\noindent \textbf{Keywords:} Degenerate Kolmogorov equation, Dini regularity, pointwise regularity, BMO pointwise estimate, VMO pointwise estimate.
 \medskip \\
\medskip
\noindent \textbf{MSC 2020:} 35K70, 35K65, 35B65, 35B44.

\setcounter{equation}{0} \setcounter{theorem}{0}
\section{Introduction}
In this paper, we study the pointwise regularity of solutions to the following Cauchy problem
\begin{equation}\label{pbm}
    \begin{cases}
    		\L u=f \quad {\text {in}}\quad  \H_1^- \\
    		f\in L^p(\H_r^-) \quad {\text {and} } \quad f(0)=0,
    \end{cases}
\end{equation}
where $\H_r^- = B_r \times (-r^2,0)$ is the past cylinder defined through the open ball $B_r = \lbrace x \in \R^N : |x|_K \leq r \rbrace$ and $|\cdot|_K$ is the semi-norm, due to the nature of operator $\L$, defined in \eqref{norm-def-form}.\\
We suppose here that $1<p<\infty$ and that the origin $0=(0,0)$ is a Lebesgue point of $f$, so that we are able to define $f(0)$ if needed.\\
We denote by $\L$ the second order linear differential operators of Kolmogorov type of the form 
\begin{equation} \label{e-Kolm-costc}
\L :=\sum_{i,j=1}^m \partial^2_{x_i x_j } +\sum_{i,j=1}^N b_{ij}x_j\partial_{x_i}-\partial_t,
%   & =\text{Tr}(A D^{2}) + \langle B x, D  \rangle-\partial_t,
\end{equation}
where $(x,t) \in \R^{N+1}$, and $1 \le m \le N$. 
%$D$ denotes the gradient with respect to the space variable $x$, and $\langle \, ,  \rangle$ is the inner product in $\R^N$. 
The matrix $B:=(b_{ij})_{i,j=1,\ldots,N}$ has real constant entries. \\
It is natural to place operator $\L$ in the framework of H\"{o}rmander's theory. More precisely, let us set
\begin{eqnarray*}
X_i:=\partial_{x_i}, \quad i=1,\ldots,m, \quad Y := \sum_{i,j=1}^N b_{ij}x_j\partial_{x_i}-\partial_t = \langle B x, D \rangle-\partial_t,
\end{eqnarray*}
where $\langle \cdot, \cdot \rangle$ and $D$ denote the inner product and the gradient in $\R^{N}$, respectively. Then the operator $\L$ can be written as 
\begin{eqnarray*}
\L =\sum_{i=1}^m X_i^2+Y.
\end{eqnarray*}
It is known that under the H\"{o}rmander's condition (see \cite{Hormander})
\begin{equation*}\label{e-Horm}
{\rm rank\ Lie}\left(X_1,\dots,X_m,Y\right)(x,t) =N+1,\qquad \forall \, (x,t) \in \R^{N+1},
\end{equation*}
$\L$ is hypoelliptic, namely that every distributional solution $u$ to $\L u = f$ defined in some open set $\Omega \subset \R^{N+1}$ belongs to $C^\infty(\Omega)$, and it is a classical solution to $\L u = f$, whenever $f \in C^\infty(\Omega)$.\\
In the sequel, we will assume the following hypothesis on the Kolmogorov operator $\L$.
\begin{itemize}
\item[{\bf[H.1]}] $\L$ is hypoelliptic and $\delta_r$-homogeneous of degree two with respect to some dilations group $(\delta_r)_{r>0}$ in $\R^{N+1}$ (see \eqref{e-dilations} below).
\end{itemize}
We remark that, if $\L$ is uniformly parabolic (i.e. $m=N$ and $B\equiv \mathbb{O}$), then assumption $\bf[H.1]$ is clearly satisfied. In fact, in this case operator $\L$ is simply the heat operator, which is known to be hypoelliptic. However, in this note we are mainly interested in the genuinely degenerate setting.
\medskip
\\
It is known that the natural geometry when studying operator $\L$ is determined by a suitable homogeneous Lie group structure on $\R^{N+1}$. More precisely, as first observed by Lanconelli and Polidoro in \cite{LanconelliPolidoro}, operator $\L$ is invariant with respect to left translation in the group $\mathbb{K}=(\mathbb{R}^{N+1},\circ)$, where the group law is defined by 
\begin{equation}
	\label{grouplaw}
	(x,t) \circ (\xi, \tau) = (\xi + E(\tau) x, t + \tau ), \hspace{5mm} (x,t),
	(\xi, \tau) \in \R^{N+1},
\end{equation}
and
\begin{equation}\label{exp}
	E(s) = \exp (-s B), \qquad s \in \R.
\end{equation} 
Then $\mathbb{K}$ is a non-commutative group with zero element $(0,0)$ and inverse
\begin{equation*}
(x,t)^{-1} = (-E(-t)x,-t).
\end{equation*}
For a given $\zeta \in \R^{N+1}$ we denote by $\ell_{\z}$ the left traslation on $\mathbb{K}=(\R^{N+1},\circ)$ defined as follows
\begin{equation*}
	\ell_{\z}: \R^{N+1} \rightarrow \R^{N+1}, \quad \ell_{\z} (z) = \z \circ z.
\end{equation*}
Then the operator $\K$ is left invariant with respect to the Lie product $\circ$, that is
\begin{equation*}
	\label{ell}
    \L \circ \ell_{\z} = \ell_{\z} \circ \L \qquad {\rm \textit{or, equivalently,}} 
    \qquad \L\left( u( \z \circ z) \right)  = \left( \L u \right) \left( \z \circ z \right),
\end{equation*}
for every $u$ sufficiently smooth.

We explicitly remark that, by Propositions 2.1 and 2.2 in \cite{LanconelliPolidoro}, hypothesis $\bf[H.1]$ is equivalent to assume that, for some basis on $\R^N$, the matrix $B$ takes the following form
\begin{equation*} \label{B_0}
 			   B_0 = \begin{pmatrix}
 			       \OO &  \OO & \ldots & \OO & \OO \\
 			       B_1 & \OO &  \ldots & \OO & \OO\\
                   \OO & B_{2}  & \ldots & \OO & \OO \\
                   \vdots & \vdots  & \ddots & \vdots & \vdots  \\
                   \OO & \OO & \ldots & B_{\k} & \OO
   			 \end{pmatrix}
\end{equation*}
where every block $B_j$ is a $m_{j} \times m_{j-1}$ matrix of rank $m_j$ with $j = 1, 2, \ldots, \k$. Moreover, the $m_j$s are positive integers such that
\begin{eqnarray*}
m_0 \geq m_1 \geq \ldots \geq m_\kappa \geq 1, \quad \textrm{and} \quad m_0+m_1+\ldots+m_\kappa=N.
\end{eqnarray*}
We agree to let $m_0 :=m$ to have a consistent notation, moreover $\OO$ denotes a block matrix whose entries are zeros. In the sequel we shall always assume that $B$ has the canonical form \eqref{B_0}.

%In other words, every block denoted by $*$ in \eqref{B} has zero entries. 
In this case the dilation is defined for every positive $r$ as
\begin{equation}\label{e-dilations}
    \delta_r :=\textrm{diag}(r I_{m}, r^3 I_{m_1}, \ldots, r^{2\kappa+1}I_{m_\kappa},r^2),
\end{equation}
where $I_k$, $k\in\N$, is the $k$-dimensional unit matrix, and the second assertion in assumption ${\bf[H.1]}$ reads as follows
\begin{equation}
	\label{Ginv}
      	 \L \left( u \circ \delta_r \right) = r^2 \delta_r \left( \L u \right), \quad \text{for every} \quad r>0.
\end{equation}
It is also useful to denote by $\left(\delta_r^0 \right)_{r > 0}$ the family of spatial dilations defined as 
\begin{equation}
	\label{fam-dil-space}
	\delta_r^0 = \text{diag} ( r I_{m} , r^3 I_{m_1}, \ldots, r^{2\k+1} I_{m_\k} ) 
   		 \quad {\rm for \, every} \, \, r > 0.
\end{equation}

The integer numbers
\begin{equation}
	\label{hom-dim}
	Q := m_{0} + 3m_{1} + \ldots + (2\k+1) m_{k},  \quad \text{and} \quad Q + 2
\end{equation}
will be named \emph{homogeneous dimension of $\R^{N}$ with respect to $(\delta_r^0)_{r>0}$}, and \emph{homogeneous dimension of $\R^{N+1}$ with respect to $(\delta_r)_{r >0}$}, because we have that
\begin{equation*}
	\det \, \delta_r^0 = r^{Q } \quad \text{and} \quad \det \, \delta_r = r^{Q + 2} \qquad \text{for every} \ r > 0.
\end{equation*}
We next introduce a homogeneous norm of degree $1$ with respect to the dilations 
$(\d_{r})_{r>0}$ and a corresponding quasi-distance which is invariant with respect to the group operation in \eqref{grouplaw}.
We first rewrite the matrix $\delta_r$ with the equivalent notation
\begin{equation}\label{e-dilations-cp}
    \delta_r :=\textrm{diag}(r^{\alpha_1}, \ldots, r^{\alpha_N},r^2),
\end{equation}
where $\alpha_1, \dots, \alpha_{m_0} =1, \alpha_{m_0+1}, \dots, \alpha_{m_0+m_1} = 3, \alpha_{N-m_\kappa}, \dots, \alpha_N = 2 \kappa + 1$. 
\begin{definition}
	\label{norm-def}
    For every $(x,t) \in \R^{N+1}$ we set
 \begin{equation}\label{norm-def-form}
		\Vert (x,t) \Vert_K =|t|^{\frac{1}{2}}+|x|, \quad |x|_K=\sum_{j=1}^N |x_j|^{\frac{1}{\alpha_j}}
		% |x_1|^{\frac{1}{\a_1}} + \ldots + 
		%|x_N|^{\frac{1}{\a_N}} + | t |%^{\frac{1}{2}},
	\end{equation}
	where the exponents $\alpha_j$, for $j=1,\ldots,N$, were introduced in \eqref{e-dilations-cp}
\end{definition}
Note that the semi-norm is homogeneous of degree $1$ with respect to the family of dilations $(\delta_r)_{r>0}$, namely $ \|\delta_r (x,t) \|_K =r \| (x,t)\|_K $ for every $r>0$ and $(x,t)\in\R^{N+1}$. In addition, the following pseudo-triangular inequality holds: for every bounded set $H \subset \R^{N+1}$ there exists a positive constant ${\bf c}_H$ such that
\begin{equation}\label{e-ps.tr.in}
 \|(x,t)^{-1}\|_K \le {\bf c}_H  \| (x,t) \|_K, \qquad  
 \|(x,t) \circ (\x,\t) \|_K \le {\bf c}_H  (\| (x,t) \|_K + \| (\x,\t) \|_K), 
\end{equation}
for every $(x,t), (\x,\t) \in H$. We then define the {\it quasi-distance} $d_K$ by setting
\begin{equation}\label{e-ps.dist}
    d_K( (x,t) ,(\x,\t)):= \|(\x,\t)^{-1}\circ (x,t)\|_K, \qquad (x,t), (\x,\t) \in \R^{N+1},
\end{equation}
and the {\it unit cylinder}
\begin{equation*}
\H_1=\lbrace (x,t) \in \R^{N+1} \mid |x|_K < 1, \quad t \in (-1,0) \rbrace.
\end{equation*}
For every $(x_0,t_0) \in \R^{N+1}$ and $r>0$, we set
\begin{equation*}
\H_r(x_0,t_0):=z_0 \circ \delta_r(\H_1)=\lbrace (x,t) \in \R^{N+1} \mid (x,t)=(x_0,t_0) \circ \delta_r(\x,\t),(\x,\t) \in \H_1 \rbrace.
\end{equation*}
We also observe that the Lebesgue measure is invariant with respect to the translation group 
	associated to $\L$, since $\det E(s) = e^{s \hspace{1mm} \text{\rm trace} \,
		B} = 1$. Moreover, we have 
	\begin{equation*}
		\meas \left( \H_r(x_0,t_0) \right) = r^{Q+2} \meas \left( \H_1(x_0,t_0) \right), \qquad \forall
		\ r > 0, (x_0,t_0) \in \R^{N+1}.
	\end{equation*}
Finally, we recall that, under the the hypothesis of hypoellipticity, H\"{o}rmander in \cite{Hormander} constructed the fundamental solution of $\L$ as
\begin{equation*} \label{eq-Gamma0}
	\Gamma (z,\zeta) = \Gamma (\zeta^{-1} \circ z, 0 ), \hspace{4mm} 
	\forall z, \zeta \in \R^{N+1}, \hspace{1mm} z \ne \zeta,
\end{equation*}
where
\begin{equation} \label{eq-Gamma0-b}
	\Gamma ( (x,t), (0,0)) = \begin{cases}
		\frac{(4 \pi)^{-\frac{N}{2}}}{\sqrt{\text{det} C(t)}} \exp \left( - 
		\frac{1}{4} \langle C^{-1} (t) x, x \rangle - t \, tr (B) \right), \hspace{3mm} & 
		\text{if} \hspace{1mm} t > 0, \\
		0, & \text{if} \hspace{1mm} t < 0,
	\end{cases}
\end{equation}
and
\begin{equation*} 	\label{c}
 C(t) = \int_{0}^{t} E(s) \, \begin{pmatrix}
 			       \mathbb{I}_m & \OO  \\ \OO & \OO 
   			 \end{pmatrix} \, E^{T}(s) \, ds.
\end{equation*}
Note that the first condition of assumption $\bf{[H.1]}$ implies that $C(t)$ is strictly positive for every $t>0$ (see \cite{LanconelliPolidoro}) and therefore $\Gamma$ in \eqref{eq-Gamma0-b} is well-defined.
\\
We now briefly discuss the applicative and theoretical interest in the study of operator $\L$. A simple meaningful example is the operator introduced by Kolmogorov in \cite{Kolmo}, defined for $(x,t) = (v,y,t) \in \R^{m} \times \R^m \times \R$ as follows
\begin{equation} \label{e-Kolm-costc0}
   \K := \sum_{j=1}^m \partial^{2}_{x_j} - \sum_{j=1}^m x_j \partial_{x_{m+j}}-\partial_t 
   = \Delta_v - \langle v, D_y \rangle - \partial_t.
\end{equation} 
The operator $\K$ can be written in the form \eqref{e-Kolm-costc} with $\kappa = 1, m_1 = m$, and 
   \begin{equation} \label{B_K}
 			   B = \begin{pmatrix}
 			       \OO & \OO  \\ - I_m & \OO 
   			 \end{pmatrix}
\end{equation}
The operator defined in \eqref{e-Kolm-costc0} arises in several areas of application of PDEs. In particular, in kinetic theory the density $u$ of particles, with velocity $v$ and position $y$ at time $t$, satisfies equation $\K u = 0$. In this setting, the Lie group associated to Kolmogorov operator has a quite natural intepretation. In fact, the composition law \eqref{grouplaw} agrees with the \emph{Galilean} change of variables 
\begin{equation*}
    (v,y,t) \circ (v_{0}, y_{0}, t_{0}) = (v_{0} + v, y_{0} + y + t v_{0}, t_{0} + t), 
    \qquad (v,y,t), (v_{0}, y_{0}, t_{0}) \in \R^{2m+1}.
\end{equation*}
It is easy to see that $\K$ is invariant with respect to the above change of variables. Specifically, if $w(v,y,t) = u (v_{0} + v, y_{0} + y + t v_{0}, t_{0} + t)$ and $g(v,y,t) = f(v_{0} + v, y_{0} + y + t v_{0}, t_{0} + t)$, then 
\begin{equation*}
	\K u = f \quad \iff \quad \K w = g \quad \text{for  every} \quad (v_{0}, y_{0}, t_{0}) \in \R^{2m+1}.
\end{equation*}
As the matrix $B$ in \eqref{B_K} is in the form \eqref{B_0}, $\K$ is invariant with respect to the dilatation $\delta_r(v,y,t) := (r v, r^3 y, r^2 t)$. Note that the dilatation acts as the usual parabolic scaling with respect to the variable $v$ and $t$, as the operator is uniformly parabolic with respect to these variables. The term $r^3$ in front of $y$ is due to the fact that the velocity $v$ is the derivative of the position $y$ with respect to time $t$. For a more comprehensive description of operator $\L$, and of its applications, we refer to the survey article \cite{AnceschiPolidoro} by Anceschi and Polidoro and to its bibliography. 
\medskip
\\
The aim of this paper is to study pointwise regularity of solutions to problem \eqref{pbm} for Kolmogorov equations with right hand side in $L^p$. This work may be seen as a generalisation of \cite{Monneau} and \cite{LindMonneau} where this kind of results are obtained for elliptic and parabolic equations respectively. However, up to our knowledge, the case of Kolmogorov type operators has not been investigated.\\ The main difficulty with respect to the previous literature lies in the fact that the regularity properties of the Kolmogorov equations on $\R^{N+1}$ depend strongly on the geometric Lie group structure introduced in \eqref{grouplaw}. In particular this reflects on the family of dilations we consider. Furthermore, according to \eqref{e-Kolm-costc}, we here take into account also the case where $m<N$ and therefore $\L$ is strongly degenerate. We emphasize that when $m=N$ and $B\equiv \mathbb{O}$, our result restores the one contained in \cite{LindMonneau}.\\
We remark that several Schauder type estimates have been proved e.g. by \v{S}atyro \cite{satyro}, Manfredini \cite{Manfredini} in the case of dilation invariant operators, Di Francesco et al. for dilation non-invariant operators in \cite{DiFrancescoPolidoro} and Polidoro et al. in \cite{prs}. In particular we note that in \cite{prs} the right hand side $f$ is Dini continuous, differently from the previous literature where $f$ was considered H\"{o}lder continuous. 
Moreover, we recall the works by Lunardi \cite{lunardi}, Lorenzi \cite{lorenzi} and Priola \cite{priola} in the framework of semigroup theory. \\
In \cite{pascuccipolidoro} and then in \cite{cintipascuccipolidoro}, a pointwise estimate for the weak solutions to Kolmogorov equations with right hand side equal to zero is proved. In order to do so, the  authors adapt the Moser iterative method to the non Euclidean framework of the (homogeneus and non-homogeneus, respectively) Lie groups. Finally, the regularity of strong solutions to the Cauchy-Dirichlet and obstacle problem for a class of Kolmogorov-type operators was studied in \cite{nystrom} using a blow-up technique.\\

While the previous results were derived assuming a modulus of continuity defined on some open set, we here introduce a \textit{pointwise modulus of mean oscillation}.\\
More precisely, following \cite{Monneau}, for $p\in (1,+\infty)$, we define the following \textit{modulus of $L^p$-mean oscillation} for the function $f$ \textit{at the origin} as
\begin{equation}\label{mmo}
\tilde {\omega}(f,r):=\inf_{c\in \R} \Big(\frac{1}{|\H_r^-|}\int_{\H_r^-}|f(x,t)-c|^p\Big)^{1\over p}.
\end{equation}
Moreover we set
\begin{equation}\label{ntilde}
\tilde {N}(u,r):=\inf_{P\in\mathcal{\tilde{P}}}  \Big(\frac{1}{r^{Q+2+2p}}\int_{\H_r^-}|u-P|^p\Big)^{1\over p},
\end{equation}
where $\tilde{\P}$ denotes the set of polynomials of degree less than or equal to two in $x_1 \ldots x_m  $ and less or equal to one in time and $Q$ is the homogeneous dimension defined in \eqref{hom-dim}. This paper is devoted to prove the following Theorem.

\begin{theorem}\label{mainthm}
Let $p\in(1,\infty)$. Then there exist $\alpha \in (0,1]$ and constants $r_* \in (0,1]$, $C >0,$ such that the following holds. In particular, if $u \in L^p(\H_1^-)$ satisfies \eqref{pbm} with the associated $\tilde{\omega}$ defined in \eqref{mmo}, then we have
\begin{itemize}
\item[i)]Pointwise BMO estimate
\begin{eqnarray}
\sup_{r \in (0,1]}\tilde{N}(u,r)\leq C \left\lbrace \left( \int_{\H_1^-}|u|^p\right)^{\ip}+\left(\int_{\H_1^-}|f|^p\right)^\ip +\sup_{r \in (0,1]}\tilde{\omega}(r)\right\rbrace.
\end{eqnarray}
\item[ii)]Pointwise VMO estimate
\begin{eqnarray}
\left( \tilde{\omega}(r)\rightarrow 0 \textrm{ as } r\rightarrow 0^+ \right) \quad \Rightarrow \quad
\left( \tilde{N}(u,r)\rightarrow 0 \textrm{ as } r\rightarrow 0^+ \right). 
\end{eqnarray}
\item[iii)] Pointwise control on the solution\\
If $\tilde{\omega}$ is Dini, then $\tilde{N}(u,\cdot)$ is Dini. Moreover, there exists a polynomial $P_0$ which is a solution to equation $\L P_0 = 0$ of degree less than or equal to two in $x_1\ldots x_m$ and of degree less than or equal to one in time, such that for every $r \in (0,r_*]$ there holds 

\begin{equation} \label{est-thm}
\begin{split}
&\left(\frac{1}{|\H_r^-|}\int_{\H_r^-}\Bigg|\frac{u(x,t)-P_0(x,t)}{r^2}\Bigg|^p\right)^{\ip}\\
&\qquad\qquad\qquad \leq C\left\lbrace \tilde{M}_0 \left(\frac{{4r}}{\lambda}\right)^{\beta}+\int_0^{4r}\frac{\tilde{\omega}(s)}{s}ds + r^{\beta}\int_{4r}^1\frac{\tilde{\omega}(s)}{s^{1+\beta}}ds \right\rbrace.
\end{split}
\end{equation}

and
\begin{eqnarray*}
P_0(x,t) = a+b\cdot x +\frac{1}{2} {}^T x \cdot c \cdot x + d \ t
\end{eqnarray*}
with
\begin{equation*}
|a|+ |b|+ |c| + |d| \leq C \tilde{M}_0 \quad \textrm{ and } \quad \tilde{M}_0 = \int_0^1 \frac{\tilde{\omega}(s)}{s}ds + \left( \int_{\H_1^-} |u|^p\right)^\ip+ \left( \int_{\H_1^-}|f|^p\right)^\ip.
\end{equation*}
\end{itemize}
\end{theorem}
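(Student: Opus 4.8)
The plan is to follow the blow-up and compactness scheme of Monneau \cite{Monneau} and Lindgren--Monneau \cite{LindMonneau}, carefully adapted to the non-Euclidean geometry induced by the Lie group $\mathbb{K}$ and the anisotropic dilations $(\delta_r)_{r>0}$. The heart of the matter is a \emph{decay estimate} of the form
\begin{equation*}
\tilde{N}(u,\lambda r) \le \tfrac12 \tilde{N}(u,r) + C\,\tilde{\omega}(f,r), \qquad r \in (0,r_*],
\end{equation*}
for a suitable fixed $\lambda \in (0,1)$ depending only on the structural data. Once this one-step geometric-decay inequality is in hand, statements i), ii) and iii) follow by now-standard iteration arguments: summing the geometric series gives the BMO bound i); the fact that $\tilde{\omega}(r)\to0$ forces $\tilde N(u,r)\to0$ gives the VMO statement ii); and under the Dini assumption one carries out the telescoping sum $\sum_k \tilde N(u,\lambda^k r_*)$, splitting $\int_0^1 \tilde\omega(s)/s\,ds$ into a part near $0$ and a tail, which produces the Dini property of $\tilde N(u,\cdot)$ and, after extracting the limiting polynomial $P_0$ as the $L^p$-limit of the near-optimal polynomials $P_r$ in \eqref{ntilde}, the quantitative Taylor expansion \eqref{est-thm} together with the bound on the coefficients $a,b,c,d$.

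To prove the decay estimate I would argue by contradiction. Suppose it fails: then there are sequences $u_k$ solving $\L u_k = f_k$ in $\H_1^-$, with (after subtracting the optimal polynomial $P_k$ and rescaling via $\delta_{r_k}$) normalized quantities $\tilde N(u_k, 1) = 1$ but $\tilde\omega(f_k, 1)\to 0$, while the decay inequality is violated at scale $\lambda$. Define the blow-up sequence $v_k(x,t) := (u_k - P_k)(\delta_{r_k}(x,t))/(\text{normalizing factor})$. The normalization $\tilde N(v_k,1)=1$ gives a uniform $L^p(\H_1^-)$ bound on $v_k$ modulo polynomials; combined with interior $L^p$ Schauder/Caccioppoli-type estimates for $\L$ (available from the cited regularity theory, e.g. \cite{prs}, \cite{Manfredini}) one obtains uniform bounds on $\partial^2_{x_ix_j}v_k$ ($i,j\le m$) and $Yv_k$ on compact subsets, hence compactness: up to a subsequence $v_k \to v_\infty$ in $L^p_{\rm loc}$, where $v_\infty$ solves $\L v_\infty = 0$ in $\H_1^-$ (because $f_k \to 0$ in the relevant sense), is orthogonal to $\tilde{\mathcal P}$ in the appropriate averaged sense (so $\tilde N(v_\infty,1)$ is still $1$, in particular $v_\infty \not\equiv$ polynomial), yet violates the decay inequality in the limit. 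But a Liouville-type / regularity argument for the homogeneous equation $\L v_\infty = 0$ — using hypoellipticity and the $\delta_r$-homogeneity from {\bf[H.1]} — shows that any such caloric-type function admits a second-order Taylor polynomial $P \in \tilde{\mathcal P}$ with $\big(r^{-(Q+2+2p)}\int_{\H_r^-}|v_\infty - P|^p\big)^{1/p} \le C r^\alpha$ for small $r$, which for $r=\lambda$ small enough contradicts the violated inequality. This yields the decay estimate with the claimed $\lambda$ and $\alpha$.

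The main obstacle I expect is the compactness / regularity input for the blow-up limit in the genuinely degenerate case $m<N$: one must produce uniform local estimates on exactly the derivatives $\partial^2_{x_ix_j}u$ ($i,j\le m$) and the Lie derivative $Yu$ — not on the full Hessian or $\partial_t u$ separately, which do not gain regularity — and these must be compatible with the anisotropic rescaling $\delta_{r_k}$ so that the normalization is preserved in the limit. A subtle accompanying point is the correct choice of the class $\tilde{\mathcal P}$ and the verification that the subtraction of optimal polynomials commutes appropriately with $\delta_r$-dilations and with the group translations, so that the quantity $\tilde N(u,r)$ behaves well under the blow-up; here the homogeneity of the norm $\|\cdot\|_K$ and the measure-scaling $\meas(\H_r)=r^{Q+2}\meas(\H_1)$ are used in an essential way. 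Finally, assembling the estimate \eqref{est-thm} requires care in tracking constants through the telescoping sum so that the three terms — the geometric term $\tilde M_0(4r/\lambda)^\beta$, the head integral $\int_0^{4r}\tilde\omega(s)/s\,ds$, and the tail $r^\beta\int_{4r}^1\tilde\omega(s)/s^{1+\beta}\,ds$ — appear with the stated structure.
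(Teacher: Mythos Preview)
Your outline is correct in spirit and follows the same Monneau--Lindgren--Monneau scheme the paper uses: a one-scale decay estimate proved by contradiction, blow-up and compactness, followed by an iteration that yields i), ii), and iii). A few technical points are handled differently in the paper, and are worth flagging.

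First, the paper does not run the decay estimate on $\tilde N$ but on the auxiliary quantity $\hat N(u,r)=\inf_{P\in\mathcal P_{c_r}}(\cdot)$, where $c_r$ is the optimal constant in \eqref{mmo1}. The point is that with $P\in\mathcal P_{c_r}$ one has $\L(u-P)=f-c_r$, whose $L^p$ average is exactly $\tilde\omega(f,r)$; with your $\tilde N$ the subtracted polynomial has $\L P$ equal to some a priori uncontrolled constant, and you would need an extra step to relate it to $c_r$. Second, the paper's blow-up limit is \emph{global}, not local: by letting $\lambda_k\to0$ and using a preliminary ``larger cylinders'' estimate (Lemma~\ref{large}) they obtain a logarithmic growth bound \eqref{log-bound} on the rescaled $w_k$ over cylinders of radius up to $r_k/\r_k\to\infty$, so the limit $w_\infty$ solves $\L w_\infty=0$ on the whole past half-space with sub-polynomial growth, and a genuine Liouville theorem forces $w_\infty\in\mathcal P$, contradicting \eqref{one}. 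Your route---keep $\lambda$ fixed, pass to a limit on $\H_1^-$, and invoke interior hypoelliptic regularity to get a Taylor expansion with uniform constant---can be made to work, but you must check that the normalisation bounds $\|v_\infty\|_{L^p}$ (not just $\tilde N(v_\infty,1)$) so that the interior $C^3$ constant, and hence the Taylor remainder, is uniform. Third, for compactness the paper does not cite Schauder theory (which would need H\"older or Dini data) but proves an ad hoc $L^p$-Caccioppoli estimate (Lemma~\ref{cacioppolilemma}) for $W=(u-P)|u-P|^{p/2-1}$ and combines it with the Sobolev-type compactness of \cite{CaElPoli}. Finally, you do not mention the last step of iii): after extracting $P_0\in\tilde{\mathcal P}$ one must show $\L P_0=0$, and this is where the assumption $f(0)=0$ enters---the paper passes to the limit in $\L u^\e=f(\delta_\e\cdot)-\L P_0$ to get $0=f(0)-\L P_0$.
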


We observe that a simple consequence of Theorem \ref{mainthm} is that the second order derivatives $\partial^2_{x_i x_j} u$, $i,j=1,\ldots,m$, and the Lie derivative $Y u$ are H\"{o}lder continuous in some open set $\Omega \subset \R^{N+1}$, when $\L u$ is H\"{o}lder continuous with respect to the distance introduced in \eqref{e-ps.dist}. Moreover, let us remark that Theorem \ref{mainthm} provides us with a Taylor-type expansion up to second order with estimate of the rest in $L^p$ norm. 
We finally emphasize that our result is completely pointwise, which does not seem to be usual when dealing with Kolmogorov-type operators. \smallskip\\
The structure of the paper is the following. In Section \ref{notation} we introduce some notation  which is used throughout the paper. Some general control results are contained in Section \ref{prel-results}, some from the literature and a Caccioppoli-type estimate we proved ad hoc for our problem. Finally, Section \ref{mainsection} is devoted to prove our main result Theorem \ref{mainthm}. In particular, for sake of semplicity, we first derive some preliminary estimates in Section \ref{prel-est} in order to finally give a shorter proof of Theorem \ref{mainthm} in Section \ref{sectionproof}.

\setcounter{equation}{0}\setcounter{theorem}{0}
\section{Notation}
\label{notation}
In this section we introduce some further notation which will be used throughout the paper.\\
We define the following classes of polynomials.
\begin{align} \nonumber
\mathcal{\tilde{P}} = & \left\lbrace P : \textrm{ polynomials of degree less or equal to two in } \; x_1 \ldots x_m  \right. \\ \label{ptil}
& \left. \textrm{  and less or equal to one in time} \right\rbrace .
\end{align}
\begin{equation} \label{p}
\P \colon= \Big\{  P \in \tilde{\P}: \L P = 0 \Big\}.
\end{equation}

\begin{equation} \label{pc}
\P_c \colon= \Big\{  P \in \tilde{\P}: \L P = c \Big\}.
\end{equation}

\noindent In particular we take $P_*$ such that $\L P_*=1$ and set $\mathcal{P}_{c} =c P_*+\mathcal{P}$. \\
Owing to \eqref{mmo}, let $c_r$ be a constant such that
\begin{equation}\label{mmo1}
\tilde {\omega}(f,r)=\Big(\frac{1}{|\H_r^-|}\int_{\H_r^-}|f(x,t)-c_r|^p\Big)^{1\over p}.
\end{equation}
If $u$ is a solution of \eqref{pbm} then
\begin{equation}\label{ncap}
\hat {N}(u,r)=\inf_{P\in\mathcal{P}_{c_r}}  \Big(\frac{1}{r^{Q+2+2p}}\int_{\H_r^-}|u-P|^p\Big)^{1\over p},
\end{equation}
where, as usual, $Q$ denotes the homogeneous dimension defined in \eqref{hom-dim}.\\
Moreover for $0<a<b$, we define
\begin{align} \label{nro}
&\hat {N}(u,a,b)=\sup_{a\le\r\le b} \hat {N}(u,\r)\\ \label{omro}
&\tilde {\omega}(f,a,b)=\sup_{a\le\r\le b} \tilde {\omega}(f,\r)
\end{align}
We also make use of the following notation
\begin{align} \label{nbar}
&\underline {N}(r)=\hat {N}(u,\lambda r,r),\\ \label{ombar}
&\underline {\omega}(r)= \tilde {\omega}(f,\lambda^2 r,r),
\end{align}
where $\lambda \in (0,1)$.

\setcounter{equation}{0}\setcounter{theorem}{0}
\section{Preliminary results}
\label{prel-results}
We here list some general ultraparabolic estimates. Some of them are well-know from the literature, so for their proofs we will refer to source.\\
First, for $\Omega$ open set in $\rnn$, $p \in \left(1,+\infty\right)$, we define the Sobolev-Stein space
\begin{eqnarray*}
S^p(\Omega)=\lbrace u \in L^p(\Omega):\partial_{x_i} u, \partial^2_{x_i x_j}u, Yu \in L^p(\Omega), \quad i,j=1,\ldots,m \rbrace.
\end{eqnarray*}
If we set
\begin{equation*}
\Vert u \Vert_{S^p(\Omega)}^p=\Vert u \Vert_{L^p(\Omega)}^p+\sum_{i=1}^m\Vert \partial_{x_i}u \Vert_{L^p(\Omega)}^p+\sum_{i,j=1}^m\Vert \partial^2_{x_i x_j}u \Vert_{L^p(\Omega)}^p+\Vert Y u \Vert_{L^p(\Omega)}^p
\end{equation*}
we have the following local a priori estimates in $S^p(\Omega)$ for solutions to $\L u =f$ (see \cite{BramCerManf}).
\begin{theorem}(Ultraparabolic interior $L^p$-estimates)\\
Assume {\bf [H.1]} holds and let $u$ be a solution to $\L u=f$ in $\Omega$, where $\Omega$ is now a bounded open set in $\rnn$. If $\Omega_1 \subset \subset \Omega$, then we can find a constant $c$, only depending on $B$, $p$, $\Omega$ and $ \Omega_1$, such that
\begin{equation}\label{intpar}
\|u\|_{S^p(\Omega_1)}\le c(\|f\|_{L^p(\Omega)}+\|u\|_{L^p(\Omega)}).
\end{equation}
%If $2q>Q+2$ , then $u$ is H\"older.
\end{theorem}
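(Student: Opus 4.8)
\medskip
\noindent\emph{Proof proposal.} This is the standard interior $L^p$ a priori estimate of \cite{BramCerManf}; here is the route I would take, specialised to the constant-coefficient operator \eqref{e-Kolm-costc}. The plan is first to reduce to a global estimate on $\rnn$ by a cut-off and then to exploit the explicit fundamental solution. Fix $\phi\in C_c^\infty(\Omega)$ with $0\le\phi\le1$ and $\phi\equiv1$ near $\overline{\Omega_1}$, and put $v:=\phi u$, which has compact support in $\Omega$. By Leibniz' rule,
\begin{equation*}
\L v \;=\; \phi f \;+\; \sum_{i,j=1}^m\bigl(\p_{x_i}\phi\,\p_{x_j}u+\p_{x_j}\phi\,\p_{x_i}u\bigr)\;+\;\Bigl(\sum_{i,j=1}^m\p^2_{x_ix_j}\phi+Y\phi\Bigr)u \;=:\;F,
\end{equation*}
so that $\|F\|_{L^p(\rnn)}\le C\bigl(\|f\|_{L^p(\Omega)}+\|Du\|_{L^p(K)}+\|u\|_{L^p(\Omega)}\bigr)$ with $K=\operatorname{supp}\phi\subset\subset\Omega$ and $Du=(\p_{x_1}u,\dots,\p_{x_m}u)$. (To make this rigorous I would first argue for $f\in C^\infty$, so that $u\in C^\infty(\Omega)$ by hypoellipticity, and then pass to the limit by density.)

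Next, since $v$ is compactly supported with $\L v=F$, I would write the convolution representation against the fundamental solution \eqref{eq-Gamma0-b},
\begin{equation*}
v(z)\;=\;-\int_{\rnn}\Gamma(\zeta^{-1}\circ z,0)\,F(\zeta)\,d\zeta,\qquad z\in\rnn,
\end{equation*}
and differentiate under the integral sign: for $i,j=1,\dots,m$,
\begin{equation*}
\p^2_{x_ix_j}v(z)\;=\;\operatorname{p.v.}\!\int_{\rnn}\p^2_{x_ix_j}\Gamma(\zeta^{-1}\circ z,0)\,F(\zeta)\,d\zeta\;+\;c_{ij}\,F(z),
\end{equation*}
while $\p_{x_i}v$ is given by convolution with $\p_{x_i}\Gamma(\cdot,0)$ and $Yv=F-\sum_{i,j=1}^m\p^2_{x_ix_j}v$ is recovered from the equation.

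The heart of the matter is the $L^p$-continuity of these operators. Because $\Gamma(\cdot,0)$ is $\delta_r$-homogeneous of degree $-Q$ (by \eqref{Ginv}) and the first $m$ variables have dilation weight one, the kernels $\p^2_{x_ix_j}\Gamma(\cdot,0)$ with $i,j\le m$ and the kernel of $Y\Gamma(\cdot,0)$ are smooth away from the origin and $\delta_r$-homogeneous of degree $-(Q+2)$, i.e.\ of the critical order for the space of homogeneous type $(\rnn,d_K)$; moreover $\L\Gamma(\cdot,0)=0$ on $\rnn\setminus\{0\}$ forces the cancellation property (vanishing mean over the unit $\|\cdot\|_K$-sphere), which one verifies by writing the kernels in divergence form and integrating by parts. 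By the Folland--Stein theory of singular integrals on homogeneous groups the associated principal-value operators are bounded on $L^p(\rnn)$ for every $p\in(1,\infty)$; the kernel $\p_{x_i}\Gamma(\cdot,0)$ is only homogeneous of degree $-(Q+1)$, hence locally integrable, so convolution against it is $L^p\!\to\!L^p$ on compactly supported data. Collecting these bounds gives $\|v\|_{S^p(\rnn)}\le C\|F\|_{L^p(\rnn)}$.

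Finally I would remove the lower-order term by absorption. The above steps (with $v\equiv u$ on $\Omega_1$) yield, for every $\Omega'\subset\subset\Omega''$, the inequality $\|u\|_{S^p(\Omega')}\le C(\|f\|_{L^p(\Omega'')}+\|Du\|_{L^p(\Omega'')}+\|u\|_{L^p(\Omega'')})$; the first-order term is then absorbed by means of the interpolation inequality
\begin{equation*}
\sum_{i=1}^m\|\p_{x_i}u\|_{L^p(\Omega')}\;\le\;\e\sum_{i,j=1}^m\|\p^2_{x_ix_j}u\|_{L^p(\Omega'')}+C(\e)\,\|u\|_{L^p(\Omega'')},
\end{equation*}
(a one-dimensional Landau--Kolmogorov estimate in each of the variables $x_1,\dots,x_m$, localised by further cut-offs) applied along a finite chain $\Omega_1=A_0\subset\subset\cdots\subset\subset A_n=\Omega$ with $\e$ chosen small enough to be absorbed on the left-hand side; this gives the estimate with constant depending only on $B$, $p$, $\Omega$ and $\Omega_1$. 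The delicate step is the singular-integral bound of the previous paragraph --- checking the cancellation of the second-order kernels and transplanting the Calder\'on--Zygmund $L^p$-theory to the anisotropic, non-commutative structure of $\mathbb{K}$; the cut-off and absorption arguments are then routine bookkeeping. The complete argument, in fact for the more general case of VMO coefficients, is carried out in \cite{BramCerManf}.
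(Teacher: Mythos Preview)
The paper does not give its own proof of this theorem: it is quoted verbatim as a known result, with the reference ``see \cite{BramCerManf}'' immediately preceding the statement. There is therefore no in-paper proof to compare your proposal against.

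That said, your outline is a faithful sketch of the standard argument behind \cite{BramCerManf}, specialised to the constant-coefficient case: localise by a cut-off, represent the compactly supported function via the explicit fundamental solution \eqref{eq-Gamma0-b}, recognise the second-order kernels $\p^2_{x_ix_j}\Gamma$ (for $i,j\le m$) as $\delta_r$-homogeneous of the critical degree $-(Q+2)$ with the cancellation property, invoke the Calder\'on--Zygmund theory on the homogeneous structure $(\rnn,\circ,\delta_r)$, and finally absorb the first-order commutator terms by interpolation along a chain of domains. This is exactly the route taken in \cite{BramCerManf} (there for variable VMO coefficients, which requires an additional freezing step you do not need here). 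One minor remark: in your Leibniz expansion the principal part of $\L$ in \eqref{e-Kolm-costc} is actually the Laplacian $\sum_{i=1}^m\p_{x_i}^2$ (cf.\ the identity $\L=\sum_{i=1}^m X_i^2+Y$ just below \eqref{e-Kolm-costc}), so the cross terms are $2\sum_{i=1}^m\p_{x_i}\phi\,\p_{x_i}u$; this does not affect the estimate, but it is worth writing cleanly.
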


We now state a general compactness result proved in \cite{CaElPoli}.
\begin{theorem} \label{compthm}
Let $\Omega$ be an open set of $\mathbb{R}^{N+1}$ and let $u\in S^{p}(\Omega)$ be a weak solution to $\L u=f$ in $\Omega$ with $f \in L^p_{loc}(\Omega)$. Then, for every $z_0\in \Omega$ and $\rho, \sigma>0$ such that  $\H_{\rho}(z_0)$ is contained in $\Omega$ and $\sigma <\frac{\rho}{2{\bf c}_H}$, with ${\bf c}_H$ defined in \eqref{e-ps.tr.in}, we have that \\

%\item( Sobolev embedding) if $1<p<Q+2$ there exists a positive constant $C_p$ such that
%\begin{equation*} 
%\|u\|_{L^{p^{\ast} }(B_\sigma )} \le C_p (\|u\|_{L^{p }(B_\rho )}+\|f\|_{L^{p}(B_\rho)}
%\end{equation*}
if $1 < p < Q+2$ and $p<q<p^{\ast}$ then there exists a positive constant ${\tilde{C}_{p,q}}$ such that 
\begin{equation*}
\|u(\cdot \circ h)-u\|_{L^{q}(\H_\sigma(z_0) )} \le {\tilde{C}_{p,q}} (\|u\|_{L^{p }(\H_\rho(z_0) )}+\|f\|_{L^{p}(\H_\rho(z_0))})\|h\|^{(Q+2)(\frac{1}{q}-\frac1{p^{\ast}})}
\end{equation*}
where 
\[\frac{1}{p^\ast} = \frac{1}{p}-\frac{1}{Q+2}.\]

\end{theorem}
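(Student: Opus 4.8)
\medskip\noindent\emph{Proof strategy.}\ The asserted bound is a quantitative form of the embedding of the Sobolev--Stein space $S^p$ into a fractional space, and the plan is to obtain it by interpolating a first-order finite-difference estimate in $L^p$ against the Sobolev inequality $S^p\hookrightarrow L^{p^\ast}$. Fix $z_0\in\Omega$ and $\rho,\sigma$ as in the statement, and choose an intermediate radius $\rho'\in(\sigma,\rho)$. By the pseudo-triangular inequality \eqref{e-ps.tr.in}, the smallness condition $\sigma<\rho/(2{\bf c}_H)$ provides a threshold $\delta_0=\delta_0(\rho,\rho',\sigma,B)>0$ such that $\H_\sigma(z_0)\circ h\subset\H_{\rho'}(z_0)$ --- and, more generally, the whole set swept out by short paths issuing from $\H_\sigma(z_0)$ towards $h$ stays inside $\H_{\rho'}(z_0)$ --- whenever $\|h\|_K<\delta_0$. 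Since $h$ is in any case restricted so that $\H_\sigma(z_0)\circ h\subset\Omega$ (whence $\|h\|_K$ is bounded), and since for $\|h\|_K\ge\delta_0$ one trivially has $\|u(\cdot\circ h)-u\|_{L^q(\H_\sigma(z_0))}\le 2\|u\|_{L^q(\H_\sigma(z_0))}$ while $\|h\|_K^{(Q+2)(1/q-1/p^\ast)}$ is bounded below, that range can be absorbed into the constant; so we may assume $\|h\|_K<\delta_0$.

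\emph{Step 1: finite differences in $L^p$.}\ The first goal is
\begin{equation*}
\|u(\cdot\circ h)-u\|_{L^p(\H_\sigma(z_0))}\ \le\ C\,\|h\|_K\Big(\sum_{i=1}^m\|\partial_{x_i}u\|_{L^p(\H_{\rho'}(z_0))}+\|Yu\|_{L^p(\H_{\rho'}(z_0))}\Big),\qquad\|h\|_K<\delta_0 .
\end{equation*}
Since $B$ has the canonical form \eqref{B_0}, the group $\mathbb{K}$ is nilpotent and the Baker--Campbell--Hausdorff formula terminates; combined with the H\"ormander condition this yields a quantitative (``ball-box'') connectivity statement: every $h$ with $\|h\|_K$ small can be joined to the origin of $\mathbb{K}$ by a concatenation of at most $M=M(B)$ integral-curve segments of the fields $\pm\partial_{x_1},\dots,\pm\partial_{x_m},\pm Y$, each of homogeneous length $\lesssim\|h\|_K$, hence traversed in time $\lesssim\|h\|_K$ if it is a $\partial_{x_i}$-segment and in time $\lesssim\|h\|_K^{2}$ if it is a $Y$-segment. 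Along one such segment, translation continuity is the fundamental theorem of calculus along the corresponding integral curve (exact because $\partial_{x_i}$ and $Y$ are the left-invariant generators of the right translations $z\mapsto z\circ h$, and because the translation flows preserve Lebesgue measure), contributing at most $(\text{segment time})\cdot\|(\text{segment field})\,u\|_{L^p(\H_{\rho'}(z_0))}\le C\|h\|_K\,\|u\|_{S^p(\H_{\rho'}(z_0))}$, where $\|h\|_K\le1$ is used on the $Y$-segments. Summing the at most $M$ contributions gives the displayed bound; for a merely weak solution one argues first on smooth functions and then invokes density of $C^\infty$ in $S^p$.

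\emph{Step 2: interior estimate and Sobolev inequality.}\ By Step~1 and the ultraparabolic interior $L^p$-estimate \eqref{intpar} (which, on an intermediate cylinder, replaces $\|u\|_{S^p(\H_{\rho'}(z_0))}$ by $\|f\|_{L^p(\H_\rho(z_0))}+\|u\|_{L^p(\H_\rho(z_0))}$),
\begin{equation*}
\|u(\cdot\circ h)-u\|_{L^p(\H_\sigma(z_0))}\ \le\ C\,\|h\|_K\big(\|f\|_{L^p(\H_\rho(z_0))}+\|u\|_{L^p(\H_\rho(z_0))}\big).
\end{equation*}
On the other hand, the Sobolev inequality for operators of Kolmogorov type --- valid precisely in the range $1<p<Q+2$, with $\tfrac1{p^\ast}=\tfrac1p-\tfrac1{Q+2}$, and deducible from a representation formula by fractional integration against the fundamental solution (the kernel $\partial_{x_i}\Gamma$ being homogeneous of degree $-(Q+2)+1$) --- gives, again together with \eqref{intpar},
\begin{equation*}
\|u(\cdot\circ h)-u\|_{L^{p^\ast}(\H_\sigma(z_0))}\ \le\ 2\|u\|_{L^{p^\ast}(\H_{\rho'}(z_0))}\ \le\ C\big(\|f\|_{L^p(\H_\rho(z_0))}+\|u\|_{L^p(\H_\rho(z_0))}\big).
\end{equation*}

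\emph{Step 3: interpolation, and the main obstacle.}\ Since $p<q<p^\ast$, write $\tfrac1q=\tfrac{1-\vartheta}{p}+\tfrac{\vartheta}{p^\ast}$ with $\vartheta=(Q+2)\big(\tfrac1p-\tfrac1q\big)\in(0,1)$, and apply the interpolation inequality $\|g\|_{L^q}\le\|g\|_{L^p}^{1-\vartheta}\|g\|_{L^{p^\ast}}^{\vartheta}$ to $g=u(\cdot\circ h)-u$ on $\H_\sigma(z_0)$. Combining with Step~2 yields
\begin{equation*}
\|u(\cdot\circ h)-u\|_{L^q(\H_\sigma(z_0))}\ \le\ C\,\|h\|_K^{\,1-\vartheta}\big(\|f\|_{L^p(\H_\rho(z_0))}+\|u\|_{L^p(\H_\rho(z_0))}\big),
\end{equation*}
and the elementary identity $1-\vartheta=1-(Q+2)\big(\tfrac1p-\tfrac1q\big)=(Q+2)\big(\tfrac1q-\tfrac1{p^\ast}\big)$ is exactly the exponent in the statement, the constant depending only on $p,q,B,\rho,\sigma$ (that is, of the form $\tilde C_{p,q}$). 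The main obstacle is Step~1 in the genuinely degenerate directions $x_j$ with $j>m$: there the displacement is produced only through iterated commutators of the $\partial_{x_i}$ with $Y$, and one must check that a ball-box path can be chosen with the sharp power $\|h\|_K^{1}$ rather than a worse fractional power --- which is where the explicit nilpotent structure of $\mathbb{K}$, together with the homogeneity degrees $\alpha_j$, enters; for instance the bracket $[\partial_{x_i},Y]$, of homogeneous degree $3$, is realized by a four-leg loop whose $\partial_{x_i}$-legs have time $\|h\|_K$ and whose $Y$-legs have time $\|h\|_K^{2}$, the product $\|h\|_K^{3}$ matching the displacement. Everything else is either quoted (\eqref{intpar} and the Sobolev inequality for $\L$) or a routine interpolation.
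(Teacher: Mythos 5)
The paper does not prove Theorem \ref{compthm}; it is quoted verbatim as an external result from \cite{CaElPoli} (``A compactness result for the Sobolev embedding via potential theory''), so there is no in-paper proof to compare your proposal against. With that caveat, your proof is a plausible \emph{alternative} route to the one the title of \cite{CaElPoli} suggests: the cited reference proceeds via potential theory, i.e.\ a representation of $u(\cdot\circ h)-u$ through the fundamental solution $\Gamma$ and direct kernel estimates on $\Gamma(z\circ h,w)-\Gamma(z,w)$, whereas you derive the $L^p$ finite--difference bound by chaining along integral curves of $X_1,\dots,X_m,Y$ (a ball--box argument) and then interpolate against a Sobolev embedding. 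The exponent bookkeeping is correct: $\vartheta=(Q+2)(1/p-1/q)$ and $1-\vartheta=(Q+2)(1/q-1/p^\ast)$, which is exactly the stated power of $\|h\|_K$.

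Two ingredients you invoke carry essentially all the weight and should be made precise rather than asserted. First, the quantitative connectivity: your Step~1 needs that every $h$ with $\|h\|_K$ small is \emph{exactly} (not just approximately) a word of bounded length in the flows of $\pm X_i,\pm Y$, with $X_i$-legs of time $\lesssim\|h\|_K$ and $Y$-legs of time $\lesssim\|h\|_K^2$; this is true by nilpotency and $\delta_r$-homogeneity (e.g.\ for Kolmogorov's operator the four-leg loop $X_1^{s}Y^{\sigma}X_1^{-s}Y^{-\sigma}$ gives exactly the displacement $(0,s\sigma,0)$, and scaling $s\sim\|h\|_K$, $\sigma\sim\|h\|_K^2$ yields the claim), but it is not an off-the-shelf lemma in the papers cited here and deserves either a reference or a short proof. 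Second, the embedding $S^p(\H_{\rho'})\hookrightarrow L^{p^\ast}(\H_{\rho'})$: your parenthetical attribution to the kernel $\partial_{x_i}\Gamma$ is slightly off, since $\partial_{x_i}\Gamma\ast\L u$ represents $\partial_{x_i}u$, not $u$; the embedding for $u$ itself follows from $u=\Gamma\ast\L u$ (kernel homogeneous of degree $-Q$, fractional integration of order $2$) when $p<(Q+2)/2$, and on the remaining range $(Q+2)/2\le p<Q+2$ from the bounded domain and an iteration in $r<p$. Once these two points are supplied with references, the interpolation in Step~3 is elementary and the argument closes.
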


As a preliminary result, we state and prove the following Caccioppoli type estimate which we obtained ad hoc for our problem.
\begin{lemma}(Caccioppoli type estimate)\label{cacioppolilemma}\\
Let $P \in \mathcal{P}_{c_r}$ and let $u$ be a solution to \eqref{pbm} in $\H_r^-$.
Let $p\in (1, +\infty)$ and let $\rho, r$ such that $1 \le \r<r.$ 
Then, for $W:=(u-p)|u-P|^{\frac{p}{2}-1}$, the following estimate holds:
\begin{align*} 
&\frac{2(p-1)}{   p^2} \int _{\H^-_{\rho}} |D_m W|^2 \\
 &\quad\le \left(\frac{2}{(p-1)}\,\frac{ c_2^2}{\left(r-\r\right)^2}+\frac{2 }{p }\,c_1 \,\frac{r^{2\kappa+1}}{r-\r}\right)  \int _{\H^{-}_r} W^2 +\tilde{\omega}(f,r) \,|\H_{r}^-| \,\Big( \frac1{|\H^-_r|}\int_{\H^-_r} \eta^{2p'} W^2\Big)^{\frac1{p'}},
\end{align*}
where $c_1$, $c_2$ are dimensional constants and $p'$ is such that $\frac{1}{p}+\frac{1}{p'}=1$.
\end{lemma}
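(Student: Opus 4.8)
The plan is to follow the classical Caccioppoli scheme, adapted to the Lie-group geometry of $\L$, testing the equation $\L u = f$ against a suitable power of $u-P$ multiplied by a cutoff, and then exploiting that $\L P = c_r$ is constant. Set $v := u-P$, so that $\L v = f - c_r$, and recall $W = v\,|v|^{\frac{p}{2}-1}$, hence $|v|^{p-2}v = W\,|W|^{\frac{p-2}{p}}$ up to normalization and $|v|^{p} = |W|^2$. First I would fix a cutoff function $\eta \in C^\infty_0$ with $\eta \equiv 1$ on $\H^-_\rho$, $\mathrm{supp}\,\eta \subset \H^-_r$, and the natural anisotropic bounds $|D_m \eta| \le c_2/(r-\rho)$ and $|Y\eta| \le c_1\, r^{2\kappa+1}/(r-\rho)$ (the second exponent reflects that $Y$ contains the drift $\langle Bx, D\rangle$ and $|x_j|\le C r^{\alpha_j}$ on $\H^-_r$ with $\alpha_j \le 2\kappa+1$). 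The test function is $\varphi := |v|^{p-2} v\, \eta^2$, which lies in the right space by the interior $S^p$-estimate \eqref{intpar} and an approximation argument (replacing $|v|$ by $(\varepsilon^2+v^2)^{1/2}$ and letting $\varepsilon\to 0$).

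Next I would integrate $\int_{\H^-_r} (\L v)\,\varphi = \int_{\H^-_r} (f-c_r)\,\varphi$. For the principal part $\sum_{i=1}^m \partial^2_{x_ix_j}$, integration by parts in the $x_1,\dots,x_m$ variables moves one derivative onto $\varphi$; using $D_m(|v|^{p-2}v) = (p-1)|v|^{p-2} D_m v$ and the identity $|v|^{p-2}|D_m v|^2 = \frac{4}{p^2}|D_m W|^2$ produces the good term $\frac{4(p-1)}{p^2}\int \eta^2 |D_m W|^2$ together with a cross term $\int |v|^{p-2} v\, (D_m v)\cdot \eta\, D_m\eta$, which after writing it in terms of $W$ and $D_m W$ is absorbed by Young's inequality, leaving $\frac{2(p-1)}{p^2}\int_{\H^-_\rho}|D_mW|^2$ on the left and generating the term with $c_2^2/(r-\rho)^2$ on the right. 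For the transport part $Y$, since $Y$ is a first-order vector field annihilating the constant and $Y$-derivatives of powers satisfy $Y(|v|^p) = p|v|^{p-2}v\,Yv$, an integration by parts (the drift is divergence-free, $\mathrm{trace}\,B = 0$) converts $\int (Yv)|v|^{p-2}v\,\eta^2 = \frac{1}{p}\int (Y|v|^p)\eta^2 = -\frac{1}{p}\int |v|^p\, Y(\eta^2) = -\frac{2}{p}\int |v|^p\,\eta\,Y\eta$, bounded by $\frac{2}{p}c_1\frac{r^{2\kappa+1}}{r-\rho}\int_{\H^-_r} W^2$; here one must check there is no boundary contribution from the time endpoints, which holds because $\eta$ vanishes near the lateral boundary and we work on the past cylinder with $v$ controlled.

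For the right-hand side, the key point is that $\int_{\H^-_r}(f-c_r)\,|v|^{p-2}v\,\eta^2$ is estimated by Hölder's inequality with exponents $p$ and $p'$: $\big(\int_{\H^-_r}|f-c_r|^p\big)^{1/p}\big(\int_{\H^-_r}\eta^{2p'}|v|^{(p-1)p'}\big)^{1/p'}$, and since $(p-1)p' = p$ and $|v|^p = W^2$, the second factor is $\big(\int_{\H^-_r}\eta^{2p'}W^2\big)^{1/p'}$, while the first factor is exactly $\tilde\omega(f,r)\,|\H^-_r|^{1/p}$ by \eqref{mmo1}; combining the $|\H^-_r|^{1/p}$ with the normalization $|\H^-_r|^{-1/p'}$ inside gives the stated last term $\tilde\omega(f,r)\,|\H^-_r|\big(\frac{1}{|\H^-_r|}\int_{\H^-_r}\eta^{2p'}W^2\big)^{1/p'}$. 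Collecting the good term on the left and the three estimated terms on the right yields the claimed inequality.

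The main obstacle I anticipate is the rigorous justification of the integration by parts and the choice of test function: $u$ is only assumed to be an $L^p$ solution, so $v$ need not a priori have the regularity to plug $|v|^{p-2}v\,\eta^2$ into the weak formulation. This is handled by the interior regularity Theorem for $S^p$ (estimate \eqref{intpar}), which upgrades $u$ on $\H^-_r$ given $f\in L^p$, combined with the standard regularization $|v|\mapsto (\varepsilon^2+v^2)^{1/2}$ to make all manipulations licit and then a passage to the limit $\varepsilon\to 0$ using dominated convergence; a secondary technical point is verifying the anisotropic cutoff bounds and the absence of boundary terms on $\partial\H^-_r$, which follow from the explicit geometry of $\H^-_r$ under $\delta_r$ and the group law \eqref{grouplaw}.
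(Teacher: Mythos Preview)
Your proposal is correct and follows essentially the same argument as the paper: test the equation for $v=u-P$ against $\eta^2 v|v|^{p-2}$, integrate by parts in the $x_1,\dots,x_m$ variables to produce the good term $\tfrac{4(p-1)}{p^2}\int\eta^2|D_mW|^2$ plus a cross term absorbed via Young's inequality, rewrite the $Y$-contribution as $\tfrac{2}{p}\int\eta W^2 Y\eta$ using the divergence theorem, apply H\"older with exponents $p,p'$ to the source term, and conclude with the explicit anisotropic cutoff bounds. The only additions in your write-up are the regularization $|v|\to(\varepsilon^2+v^2)^{1/2}$ and the appeal to interior $S^p$-regularity to justify the test function, which the paper leaves implicit.
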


\begin{proof}
On the past cylinder $\H_r^-$, we have
\begin{eqnarray}\label{twoeq}
-\L u + f =0, \quad \textrm{and} \quad -\L P +c_r=0,
\end{eqnarray}
since $u$ is a solution to \eqref{pbm} and $P \in \mathcal{P}_{c_r}$. Taking the difference of the two equations in \eqref{twoeq} and multiplying it by $\phi:=\eta^2 w |w|^{p-2}$, with $w=u-P$, we obtain
\begin{eqnarray}\label{twoeqint}
-\int_{\H_r^-}\eta^2 w |w|^{p-2} \L w =-\int_{\H_r^-}\eta^2 w |w|^{p-2}\left(f(x)-c_r\right).
\end{eqnarray}
An integration by parts shows that 
\begin{equation}\label{parts}
\begin{split}
-\int_{\H_r^-}\eta^2 w |w|^{p-2} \L w &=\int_{\H_r^-} \langle A D_m w, D_m(\eta^2 w |w|^{p-2})\rangle-\int_{\H_r^-}\eta^2 w |w|^{p-2}Y(w)\\
&=:I_1+I_2,
\end{split}
\end{equation}
where $D_m$ denotes the gradient with respect to $x_1,\ldots,x_m$.
We now observe that  
$$D_m \phi= 2 \eta D_m\eta \, w|w|^{p-2} +\eta^2 (p-1) |w|^{p-2} D_m w$$
and therefore we can rewrite the term $I_1$ on the right-hand side of \eqref{parts} as
\begin{eqnarray*}
%\int_{\H_r^-} \langle A D_m w,D_m\phi \rangle \, dx
I_1= 2\int_{\H_r^-} \langle A D_m w , D_m\eta\rangle \eta w|w|^{p-2} +(p-1)\int_{\H_r^-} \eta^2  |w|^{p-2} \langle A D_m w, D_m w\rangle .
\end{eqnarray*}
Taking advantage of $D_m W=\frac{p}2 |w|^{ \frac{p}2-1} D_m w$, for $W=w|w|^{\frac{p}{2}-1}$, the previous equation rewrites as
\begin{eqnarray}\label{stimaI1}
I_1= \frac{4(p-1)}{ p^2} \int_{\H_r^-} \eta^2 \langle A D_m W, D_m W\rangle + \frac4{p} \int_{\H_r^-} \eta  W\, \langle A D_m W, D_m \eta \rangle   .
\end{eqnarray}
We now take care of the term $I_2$ in \eqref{parts}. We first notice that
\begin{eqnarray*}
Y(W)=\frac{p}{2}|w|^{\frac{p}{2}-1}Y(w),
\end{eqnarray*}
which, together with the divergence theorem and the identity
\begin{eqnarray*}
Y(W^2\eta^2)=2\eta W^2 Y(\eta)+2\eta^2 W Y(W),
\end{eqnarray*}
yields
\begin{eqnarray}\label{stimaI2}
I_2=\frac{2}{p}\int_{\H_r^-}\eta W^2 Y(\eta).
\end{eqnarray}
Thus, combining \eqref{stimaI1} and \eqref{stimaI2}, we can rewrite identity \eqref{twoeqint} as
\begin{equation*}\label{eqzero}
\begin{split}
0&=\frac{4(p-1)}{ p^2} \int_{\H_r^-} \eta^2 \langle A D_m W, D_m W\rangle + \frac4{p} \int_{\H_r^-} \eta  W\, \langle A D_m W, D_m \eta\rangle\\
&\quad \quad+\frac{2}{p}\int_{\H_r^-}\eta W^2 Y(\eta)+\int_{\H_r^-}\eta^2 w |w|^{p-2}\left(f(x)-c_r\right).
\end{split}
\end{equation*}
Now, setting $\varepsilon=\frac{p-1}{2p}$ and using the estimate
\begin{eqnarray*}
\eta \, |W|\, |\langle A D_m W, D_m \eta\rangle| \le \varepsilon \eta^2 \langle A D_m W , D_m W\rangle+ \frac{W^2}{ 4\varepsilon} \langle A D_m \eta , D_m \eta \rangle,
\end{eqnarray*}
we finally obtain

\begin{equation}\label{nonalligned}
\begin{split}
%\frac{2(p-1)}{ p^2} \int \eta^2 |D_m W|^2& \le 
&\frac{2(p-1)}{ p^2} \int_{\H_r^-} \eta^2 \langle A D_m W, D_m W\rangle\\
 &\quad \le \frac{2}{(p-1)} \int_{\H_r^-} W^2 \langle A D_m \eta , D_m \eta \rangle +\frac{2}{p}\int_{\H_r^-}   W^2 \eta |Y(\eta)|+\int_{\H_r^-} |f-c_r| \eta^2 |W|^{\frac{ 2(p-1)}{p}}\\
&\quad \leq \frac{2}{(p-1)} \int_{\H_r^-} W^2 \langle A D_m \eta , D_m \eta \rangle +\frac{2}{p}\int_{\H_r^-}   W^2 \eta |Y(\eta)|\\
&\qquad \qquad \qquad\qquad \qquad\qquad+|\H_r^-|\, \tilde{\omega}(f,r)\,\left(\frac{1}{|\H_r^-|}\int_{\H_r^-} \eta^{2 p' }|W|^{\frac{ 2p'(p-1)}{p}}	\right)^{\frac{1}{p'}}\\
&\quad\leq \frac{2}{(p-1)} \int_{\H_r^-} W^2 \langle A D_m \eta , D_m \eta \rangle +\frac{2}{p}\int_{\H_r^-}   W^2 \eta |Y(\eta)|\\
&\qquad \qquad \qquad\qquad \qquad\qquad+|\H_r^-|\, \tilde{\omega}(f,r)\,\left(\frac{1}{|\H_r^-|}\int_{\H_r^-} \eta^{2 p' }|W|^{2}	\right)^{\frac{1}{p'}}.
\end{split}
\end{equation}
where $p'$ is such that $\frac{1}{p}+\frac{1}{p'}=1$.
The thesis follows by making a suitable choice of the function $\eta$ in \eqref{nonalligned}. More precisely, we set
\begin{eqnarray*}
\eta(x,t)=\chi\left(\Vert (x,0) \Vert_K\right)\chi_t(t)
\end{eqnarray*}
where $\chi \in C^\infty([0,+\infty))$ is the cut-off function defined by
\begin{equation*}\label{chi}
\chi(s)=\left\{ \begin{array}{ll}
0,\quad &\textrm{if $s \geq r$},\\
1,\quad &\textrm{if $0\leq s \leq \r$},
\end{array} \right. \quad |\chi'|\leq \frac{2}{r-\r},
\end{equation*}
and $\chi_t \in C^\infty((-\infty,0]$ is defined by
\begin{equation*}\label{chit}
\chi_t(s)=\left\{ \begin{array}{ll}
0,\quad &\textrm{if $s \leq -r^2$},\\
1,\quad &\textrm{if $-\r^2\leq s \leq 0$},
\end{array} \right. \quad |\chi'_t|\leq \frac{2}{r-\r},
\end{equation*}
with $\frac{r}{2} \leq \r < r $. We observe that
\begin{equation*}
		\label{ineq}
		|Y \eta| \le c_1 \,\frac{r^{2\kappa+1}}{r-\r},
		\hspace{8mm}
		|\p_{x_j} \eta| \le \frac{c_2}{r-\r}
		\hspace{2mm} \text{for } j = 1, \ldots, m_0,
	\end{equation*}
	where $c_1$ and $c_2$ are dimensional constants. Then, accordingly to \eqref{nonalligned}, we finally obtain
	\begin{equation*}
\begin{split}
%\frac{2(p-1)}{  \ p^2} \int \eta^2 |D_m W|^2& \le 
&\frac{2(p-1)}{ p^2} \int_{\H_\r^-}  | D_m W|^2\\
&\quad\leq \frac{2}{(p-1)}\frac{c_2^2}{(r-\r)^2} \int_{\H_r^-} W^2  +\frac{2}{p}\,c_1 \,\frac{r^{2\kappa+1}}{r-\r}\int_{\H_r^-}   W^2 +|\H_r^-|\, \tilde{\omega}(f,r)\,\left(\frac{1}{|\H_r^-|}\int_{\H_r^-} \eta^{2 p' }|W|^{2}	\right)^{\frac{1}{p'}}
\end{split}
\end{equation*}
and this concludes the proof.
\end{proof}

\setcounter{equation}{0}\setcounter{theorem}{0}
\section{Pointwise estimates for the Kolmogorov equation}
\label{mainsection}
This Section is the core of the paper and it is devoted to prove our main result, Theorem \ref{mainthm}. Since the proof is rather convoluted, we have decomposed it in intermediate results proved in Section \ref{prel-est}, which will be combined in Section \ref{sectionproof} in order to give a simpler proof of Theorem \ref{mainthm}.

\subsection{Preliminary estimates}
\label{prel-est}
The following result is a useful tool in order to prove Lemma \ref{large}.
\begin{lemma}\label{lcil}
The following statements hold:
\begin{itemize}
\item[(i)]there exists a constant $C_2 = C_2(p,Q) > 0$ s.t. for every polynomial $P\in \tilde{\P}$,  for any $r \geq 1$ it holds
\begin{equation*}
\left( \frac{1}{r^{Q+2+2p}}\int_{\H_r^-}|P|^p\right)^{\ip}\leq C_2 \left( \int_{\H_1^-} |P|^p\right)^\ip;
\end{equation*}
\item[(ii)] there exists a constant $\tilde{C}_2 = \tilde{C}_2(p,Q) > 0$ s.t. for every polynomial $P\in \tilde{\P}$,  for any $r < 1$ it holds
\begin{equation*}
\left( \int_{\H_1^-}|P|^p\right)^{\ip}\leq \tilde{C}_2 \left( \frac{1}{r^{Q+2+2p}} \int_{\H_r^-} |P|^p\right)^\ip.
\end{equation*}
\end{itemize}
\end{lemma}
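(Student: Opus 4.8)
\textbf{Proof plan for Lemma \ref{lcil}.}

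The plan is to exploit the finite dimensionality of the space $\tilde{\P}$ together with the scaling structure of the cylinders $\H_r^-$. The key observation is that on the finite-dimensional vector space $\tilde{\P}$, all norms are equivalent; in particular, for a polynomial $P(x,t)=a+b\cdot x+\frac12\,{}^Tx\cdot c\cdot x+d\,t$ with coefficients supported on the first $m$ space variables plus time, the quantity $\left(\int_{\H_1^-}|P|^p\right)^{1/p}$ is comparable (with constants depending only on $p$ and $Q$, hence on the geometry of $\H_1^-$) to the coefficient norm $|a|+|b|+|c|+|d|$. So I would first reduce both inequalities to tracking how this coefficient norm transforms under the intrinsic dilation $\delta_r$.

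For part (i), I would change variables in $\int_{\H_r^-}|P|^p$ by writing $(x,t)=\delta_r(\x,\t)$ with $(\x,\t)\in\H_1^-$; the Jacobian contributes $r^{Q+2}$, and since the variables $x_1,\dots,x_m$ scale like $r$ and $t$ like $r^2$, the composition $P\circ\delta_r$ is again a polynomial in $\tilde{\P}$ whose coefficients are $a$, $rb$, $r^2c$, $r^2d$ — each rescaled by a nonnegative power of $r$ that is \emph{at most} $r^2$ when $r\ge 1$. Hence
\[
\frac{1}{r^{Q+2+2p}}\int_{\H_r^-}|P|^p = \frac{1}{r^{2p}}\int_{\H_1^-}|P\circ\delta_r|^p \le C(p,Q)\,\frac{1}{r^{2p}}\bigl(|a|+r|b|+r^2|c|+r^2|d|\bigr)^p \le C(p,Q)\,\bigl(|a|+|b|+|c|+|d|\bigr)^p,
\]
using $r\ge 1$ in the last step, and then bounding the coefficient norm back by $C(p,Q)\left(\int_{\H_1^-}|P|^p\right)^{1/p}$ via norm equivalence. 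Taking $p$-th roots gives (i). For part (ii) the argument is the mirror image: now $r<1$, so I use norm equivalence to pass from $\left(\int_{\H_1^-}|P|^p\right)^{1/p}$ to the coefficient norm $|a|+|b|+|c|+|d|$, observe that $|a|\le |a|+r|b|+r^2|c|+r^2|d|$ bounds it (here it is crucial that we bound the \emph{largest} rescaled coefficient, which after dividing by the smallest power $r^0$ still controls everything once $r<1$ — more precisely one bounds $|a|+|b|+|c|+|d|\le r^{-2}(|a|+r|b|+r^2|c|+r^2|d|)$ trivially since $r<1$), and then recognizes $r^{-2}(|a|+r|b|+r^2|c|+r^2|d|)$ as comparable to $r^{-2}\left(\int_{\H_1^-}|P\circ\delta_r|^p\right)^{1/p} = \left(\frac{1}{r^{Q+2+2p}}\int_{\H_r^-}|P|^p\right)^{1/p}$ after undoing the change of variables.

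The only genuinely delicate point is making the norm-equivalence constants uniform: one must check that $P\mapsto\left(\int_{\H_1^-}|P|^p\right)^{1/p}$ is indeed a norm on $\tilde{\P}$ (nondegeneracy: a polynomial vanishing a.e.\ on the open set $\H_1^-$ vanishes identically) and that the ambient finite-dimensional space has dimension depending only on $m$, which is encoded in $Q$ — so the equivalence constants legitimately depend only on $p$ and $Q$. Everything else is bookkeeping with the dilation exponents $\alpha_j$, and the key inequalities $r\ge 1 \Rightarrow r^{\alpha}\le r^{2}$ for $\alpha\in\{0,1,2\}$ in (i), respectively $r<1 \Rightarrow 1\le r^{-2}$ in (ii). I do not anticipate any substantive obstacle beyond organizing this cleanly.
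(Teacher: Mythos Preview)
Your proposal is correct and follows essentially the same route as the paper: both arguments pass through the coefficient norm $|a|+|b|+|c|+|d|$ of $P(x,t)=a+b\cdot x+\tfrac12\,{}^Tx\cdot c\cdot x+d\,t$, exploit the scaling $x_i\mapsto r x_i$ $(i\le m)$, $t\mapsto r^2 t$, and use the equivalence between this coefficient norm and $\|P\|_{L^p(\H_1^-)}$. The only cosmetic difference is that the paper obtains the upper bound \eqref{part1P} by a direct pointwise estimate on $\H_r^-$ and establishes the equivalence \eqref{part2P} via an ad hoc contradiction, whereas you package both directions as the standard ``all norms on a finite-dimensional space are equivalent'' and route the scaling through the explicit change of variables $(x,t)=\delta_r(\xi,\tau)$; your formulation is slightly cleaner but not materially different.
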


\begin{proof}
We only carry out the proof of assertion $(i)$, since case $(ii)$ is totally analogous.
We start by writing the polynomial $P$ as $P(x,t) = a+b\cdot x +\frac{1}{2} {}^T x \cdot c \cdot x + d \ t$, where $b \in \R^m$ and $c$ is an $m \times m $ matrix. We moreover recall that $\H_r^- = B_r \times (-r^2,0)$, where $B_r = \lbrace x \in \R^N : |x|_K \leq r \rbrace$, with $|\cdot|_K$ as defined in \eqref{norm-def-form}. Then, owing to $\|(x,t)\|_K = |x|_K +|t|^{1/2}$ with in particular $|x_i|\leq r$ for $i=1\ldots m$ and $|t|\leq r^2$, there exists a constant $C>0$ s.t.
\begin{equation}\label{part1P}
\left( \frac{1}{r^{Q+2+2p}}\int_{\H_r^-}|P|^p\right)^{\ip}\leq C \left( \frac{|a|}{r^2}+\frac{|b|}{r}+ |c| + |d| \right).
\end{equation}
On the other hand it is possible to show by contradiction that
\begin{equation}\label{part2P}
|a|+ |b|+ |c| + |d| \leq C \left(\int_{\H_1^-} |P|^p\right)^\ip.
\end{equation}
Indeed, if \eqref{part2P} is false, we have that for every $C>0$ it holds
 \begin{equation*}
|a|+ |b|+ |c| + |d| > C \left(\int_{\H_1^-} |P|^p\right)^\ip.
\end{equation*}
Putting this together with \eqref{part1P} with $r\geq 1$ we infer that constant $C$ should be both less than 1 and bigger or equal to 1, which is absurd. \\
The thesis follows by the combination of \eqref{part1P} and \eqref{part2P}, with $r \geq 1$. 
\end{proof}

We now prove the following Lemma.
\begin{lemma}(Estimates on larger cylinders)\label{large}\\
Let $u$ be solution of $\L u=f$ in $\H_R^-$ for $R>2$. %and $\hat{N}(u, 1)=\|u-P_1\|_{L^p(\H_1^-)}$ with $P_1\in \mathcal{P}_{1}$. 
Then for any $\r\in [1, R/2]$, there exists a positive constant $C_1=C_1(p,Q)$ s.t.
\begin{equation}\label{estilarge}
\Big( \frac{1}{\r^{Q+2+2p}}\int_{\H_\r^-}|u-P_1|^p dx \ dt\Big)^{\ip}\le C_1 \int_1^{4\r} \frac{\hat {N}(u, s)+\tilde {\omega}(f, s)}{s} \ ds,
\end{equation}
where $P_1\in \mathcal{P}_{1}$.
\end{lemma}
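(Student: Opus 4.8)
The plan is to prove \eqref{estilarge} by a telescoping argument across dyadic scales between $1$ and $\r$, using the definition of $\hat N(u,\cdot)$ to replace $u$ by a suitable polynomial on each scale and then controlling the differences of consecutive polynomials. Fix $\r \in [1, R/2]$ and let $k$ be the integer with $2^k \le \r < 2^{k+1}$. For each integer $j$ with $0 \le j \le k$, the definition \eqref{ncap} of $\hat N(u, 2^j)$ furnishes a polynomial $P_j \in \mathcal{P}_{c_{2^j}}$ (which, after absorbing the constant $c_{2^j}$ into $P_*$, we may and do take in $\mathcal{P}_1$ up to rescaling of the source normalisation used in the statement) such that
\begin{equation*}
\Big(\frac{1}{2^{j(Q+2+2p)}}\int_{\H^-_{2^j}}|u-P_j|^p\Big)^{1/p} \le 2\,\hat N(u, 2^j).
\end{equation*}
First I would set up the telescoping inequality: writing $u - P_1$ (meaning the polynomial at the largest relevant scale, say $P_k$) $= (u - P_k) + \sum_{j=1}^{k-1}(P_{j+1}-P_j) + (P_1 - P_1)$ and using the quasi-triangle inequality for the $L^p$ average over $\H^-_\r$, I reduce the estimate to (a) the term $(\frac{1}{\r^{Q+2+2p}}\int_{\H^-_\r}|u-P_k|^p)^{1/p}$, which is $\le C\,\hat N(u,\r)$ since $2^k \sim \r$ and the measures of $\H^-_{2^k}$ and $\H^-_\r$ are comparable, and (b) the sum of the "polynomial increment" terms $(\frac{1}{\r^{Q+2+2p}}\int_{\H^-_\r}|P_{j+1}-P_j|^p)^{1/p}$.

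The heart of the argument is step (b). Here I would use Lemma \ref{lcil}: since $P_{j+1}-P_j \in \tilde{\P}$, part (i) of that lemma lets me bound the $L^p$ average of $P_{j+1}-P_j$ on a large cylinder $\H^-_\r$ by its $L^p$ average on $\H^-_{2^{j+1}}$ (applying the lemma to the rescaled polynomial $(P_{j+1}-P_j)\circ\delta_{2^{j+1}}$), up to a dimensional constant $C_2$. On $\H^-_{2^{j+1}}$ I then use the triangle inequality
\begin{equation*}
\Big(\frac{1}{2^{(j+1)(Q+2+2p)}}\int_{\H^-_{2^{j+1}}}|P_{j+1}-P_j|^p\Big)^{1/p} \le \Big(\tfrac{1}{\ldots}\int|u-P_{j+1}|^p\Big)^{1/p} + \Big(\tfrac{1}{\ldots}\int_{\H^-_{2^{j+1}}}|u-P_j|^p\Big)^{1/p},
\end{equation*}
the first term being $\le 2\,\hat N(u,2^{j+1})$ and the second, after passing from the cylinder $\H^-_{2^{j+1}}$ down to $\H^-_{2^j}$ (enlarging the domain of integration changes the constant only dimensionally, since $\H^-_{2^j}\subset\H^-_{2^{j+1}}$ and the normalising powers differ by a fixed factor), being $\le C\,\hat N(u, 2^j)$. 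I would also need to account for the fact that the $P_j$ lie in $\mathcal{P}_{c_{2^j}}$ rather than a common affine space: the differences of the constants $c_{2^j}$ across one dyadic step are controlled by $\tilde\omega(f, 2^{j+1})$ via \eqref{mmo1} and the quasi-triangle inequality for $f$-averages, which is exactly why $\tilde\omega(f,s)$ appears in the right-hand side of \eqref{estilarge} alongside $\hat N(u,s)$. Summing over $j$ from $0$ (or $1$) to $k$ gives $\sum_j [\hat N(u,2^j) + \tilde\omega(f,2^j)]$, and comparing this discrete sum with the integral $\int_1^{4\r}\frac{\hat N(u,s)+\tilde\omega(f,s)}{s}\,ds$ — using that $\hat N(u,\cdot)$ and $\tilde\omega(f,\cdot)$ are, up to dimensional constants, almost monotone / doubling in the sense that $\hat N(u, s) \lesssim \hat N(u, 2s)$ on the relevant range (each value at $2^j$ is comparable to $\int_{2^{j-1}}^{2^{j+1}} \frac{\hat N(u,s)}{s}ds$ up to a constant) — converts the sum into the claimed integral, with $4\r$ chosen so that the top dyadic block $2^{k+1}\le 2\r \le 4\r$ is covered.

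The main obstacle I anticipate is the bookkeeping around the non-common base spaces $\mathcal{P}_{c_{2^j}}$: one must track how the $c$-constants drift across scales and show the drift is summably controlled by $\tilde\omega(f,\cdot)$, which requires combining the definition \eqref{mmo1} of $c_r$ with the doubling of the cylinders and the quasi-triangle inequality — not difficult in principle but easy to get the constants tangled. A secondary subtlety is justifying the passage "average on large cylinder $\le$ average on small cylinder" for polynomials in both directions needed here; this is precisely the content of Lemma \ref{lcil}(i)--(ii), so it is available, but one must apply it to the correct rescaled polynomial and keep the exponents $Q+2+2p$ consistent. Everything else — the dyadic telescoping and the sum-to-integral comparison — is routine once the scale-comparison constants are pinned down, and the final constant $C_1$ depends only on $p$ and $Q$ as claimed.
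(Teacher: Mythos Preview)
Your proposal is correct and follows essentially the same strategy as the paper: dyadic telescoping of the polynomial increments, use of Lemma~\ref{lcil} to compare averages of $P_{j+1}-P_j$ on cylinders of different radii, control of the drift $|c_{2^{j+1}}-c_{2^j}|$ by $\tilde\omega(f,\cdot)$ via \eqref{mmo1}, and a final sum-to-integral comparison. The paper writes $\r=2^k r$ with $r\in[1/2,1)$ rather than $2^k\le\r<2^{k+1}$, and it derives the almost-monotonicity $\hat N(u,\alpha r)\le C_\gamma(\hat N(u,\gamma r)+\tilde\omega(f,\gamma r))$ explicitly within the proof (this is exactly the ``drift'' bookkeeping you flagged as the main obstacle), but these are cosmetic differences.
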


\begin{proof}
We start working on the left hand side of \eqref{estilarge}. Namely, for any $\r \geq 1$
\begin{align}\nonumber
&\Big( \frac{1}{\r^{Q+2+2p}}\int_{\H_\r^-}|u-P_1|^p \Big)^{\ip} \\ \label{estlem}
\leq &\Big( \frac{1}{\r^{Q+2+2p}}\int_{\H_\r^-}|u-P_\r|^p \Big)^{\ip} + \Big( \frac{1}{\r^{Q+2+2p}}\int_{\H_\r^-}|P_\r-P_1|^p \Big)^{\ip} = \hat{N}(u,\r)+I_1.
\end{align}
where in the last line we recalled \eqref{ncap}, and $P_\r \in \tilde{\P_{c_\r}}$ is a polynomial realizing the infimum in the definition of $\hat{N}(u,\cdot)$ at the level $\r$.
We now estimate $I_1$ as follows
\begin{align}\label{i1est}
I_1  \leq \Big( \frac{1}{\r^{Q+2+2p}}\int_{\H_\r^-}|P_r-P_1|^p \Big)^{\ip} + \sum_{j=1}^k \Big( \frac{1}{\r^{Q+2+2p}}\int_{\H_\r^-}|P_{2^j r}-P_{2^{j-1}r}|^p \Big)^{\ip} =: I_2 + I_3,
\end{align}
where we have written $\r \geq 1$ as $\r=2^k r$ for an integer $k \geq 1$ and $r \in \left[1/2,1\right)$.
In order to control $I_2$ and $I_3$ we need to achieve a more general estimate. For an arbitrary $\gamma>1$, for any $\alpha \in [1,\gamma]$ we have that
\begin{align}\nonumber
&\Big( \frac{1}{r^{Q+2+2p}}\int_{\H_r^-}|P_{\alpha r}-P_r|^p \Big)^{\ip} \\ \nonumber
\leq &\Big( \frac{1}{r^{Q+2+2p}}\int_{\H_r^-}|u-P_r|^p \Big)^{\ip}+\Big( \frac{1}{r^{Q+2+2p}}\int_{\H_r^-}|u-P_{\alpha r}|^p \Big)^{\ip}\\ \nonumber
\leq &\Big( \frac{1}{r^{Q+2+2p}}\int_{\H_r^-}|u-P_r|^p \Big)^{\ip} + \alpha^\frac{Q+2+2p}{p} \Big( \frac{1}{(\alpha r)^{Q+2+2p}} \int_{\H_{\alpha r}^-}|u-P_{\alpha r}|^p \Big)^{\ip}\\ \nonumber
\leq& \alpha^\frac{Q+2+2p}{p} (\hat{N}(u,r)+\hat{N}(u,\alpha r))\\ \label{nalpha}
\leq& \gamma^\frac{Q+2+2p}{p} (\hat{N}(u,r)+\hat{N}(u,\alpha r)).
\end{align}

We take care of $I_2$ choosing $\alpha r = 1$, for $r \in \left[\frac{1}{\gamma},1\right)$ in \eqref{nalpha}. Namely, applying both case $(i)$ and $(ii)$ from Lemma \ref{lcil}, we get
\begin{align}\nonumber
I_2 &:= \Big( \frac{1}{\r^{Q+2+2p}}\int_{\H_\r^-}|P_r-P_1|^p \Big)^{\ip}\leq C_2 \Big( \int_{\H_1^-}|P_1-P_r|^p \Big)^{\ip}\\ \label{i2est}
&\leq C \Big( \frac{1}{r^{Q+2+2p}}\int_{\H_r^-}|P_1-P_r|^p \Big)^{\ip} \leq \gamma^\frac{Q+2+2p}{p} (\hat{N}(u,r)+\hat{N}(u,1)).
\end{align}
Exploiting again \eqref{nalpha} together with case $(i)$ from Lemma \ref{lcil}, we infer that for every $\r \geq 1$, which we write as $\r =2^k r$ with $r \in \left[\frac{1}{\gamma},1\right)$, there holds
\begin{align} \label{i3est}
I_3 \leq C \sum_{j=1}^k \hat{N}(u,2^jr).
\end{align}
Now, collecting bounds \eqref{i1est}, \eqref{i2est} and \eqref{i3est}, we have that \eqref{estlem} reads
\begin{align}\nonumber
&\Big( \frac{1}{\r^{Q+2+2p}}\int_{\H_\r^-}|u-P_1|^p \Big)^{\ip} \\ \label{semifinalest}
\leq & C \left(\hat{N}(u,1) + \hat{N}(u,r) + \sum_{j=1}^k \hat{N}(u,2^jr)\right),
\end{align}
where $C=C(p,Q,\gamma)$ is a positive constant.
We now want to estimate the right hand side of \eqref{semifinalest}, in particular for any $\gamma >1$ and for $\alpha \in [1, \gamma]$ it follows that
\begin{align}\nonumber
\hat{N}(u,\alpha r) &\leq  \Big( \frac{1}{(\alpha r)^{Q+2+2p}} \int_{\H_{\alpha r}^-}|u-P_{\alpha r}|^p \Big)^{\ip}\\ \nonumber
&\leq  \Big( \frac{1}{(\gamma r)^{Q+2+2p}} \int_{\H_{\gamma r}^-}|u-P_{\gamma r}|^p \Big)^{\ip} {\left(\frac{\gamma}{\alpha}\right)}^{\frac{Q+2+2p}{p}}
+  \Big( \frac{1}{(\alpha r)^{Q+2+2p}} \int_{\H_{\alpha r}^-}|P_{\gamma r}-P_{\alpha r}|^p \Big)^{\ip}\\ \label{Nalpha}
&\leq  \gamma^\frac{Q+2+2p}{p}\hat{N}(u,\gamma r) + C_2|c_{\alpha r}-c_{\gamma r}|\left( \int_{\H_1^-}|P_*|^p\right)^\ip
\end{align}
where in the last line we used case $(i)$ of Lemma \ref{lcil} and we introduced $P_*$ as a solution to equation $\L P_* =1$.
In particular, from \eqref{mmo1}, we obtain
\begin{align} \nonumber
|c_{\alpha r}-c_{\gamma r}| 
&= \left( \frac{1}{|\H_{\alpha r}^-|}\int_{\H_{\alpha r}^-}|c_{\alpha r}-c_{\gamma r}|^p\right)^\ip\\ \nonumber
&\leq\left( \frac{1}{|\H_{\alpha r}^-|}\int_{\H_{\alpha r}^-}|f-c_{\alpha r}|^p\right)^\ip +
\gamma^\frac{Q+2}{p}\left( \frac{1}{|\H_{\gamma r}^-|}\int_{\H_{\gamma r}^-}|f-c_{\gamma r}|^p\right)^\ip \\ \label{cest}
&\leq \tilde{\omega}(f,\alpha r) + \gamma^\frac{Q+2}{p}\tilde{\omega}(f,\gamma r)\leq 2\gamma^\frac{Q+2}{p}\tilde{\omega}(f,\gamma r). 
\end{align}
Thus, combining \eqref{cest} with \eqref{Nalpha} we infer that for any $\gamma>1$ there esists a positive constant $C_\gamma= C_\gamma(p,Q,\gamma)$ s.t. for any $\alpha \in [1,\gamma]$ it holds
\begin{align} \nonumber
&\hat{N}(u,\alpha r) \leq C_\gamma \left(\hat{N}(u,\gamma r)+\tilde{\omega}(f,\gamma r)\right),\\ \label{nmon}
& \tilde{\omega}(f,\alpha r)\leq \ C_\gamma \tilde{\omega}(f,\gamma r).
\end{align}
Eventually, putting together \eqref{semifinalest} and \eqref{nmon} and choosing $\gamma =2$ we finally obtain
\begin{align*}
\Big( \frac{1}{\r^{Q+2+2p}}\int_{\H_\r^-}|u-P_1|^p \Big)^{\ip}
&\leq 3C \sum_{j=1}^k \left(\hat{N}(u,2^{j+1} r)+\tilde{\omega}(f,2^{j+1}r)\right)\\
&\leq 6C \sum_{j=1}^k \dfrac{\hat{N}(u,2^{j+1} r)+\tilde{\omega}(f,2^{j+1}r)}{2^{j+1}}\left(2^{j+2} r-2^{j+1} r\right)\\
&\leq 6C \int_1^{4\r}\dfrac{\hat{N}(u,s)+\tilde{\omega}(f,s)}{s}ds.
\end{align*}
\end{proof}

\noindent As a consequence, we can prove the dacay estimate below.

\begin{proposition}(Basic decay estimate)\label{decayprop}\\
Given $p\in(1,+\infty)$, there exist constants $C_0 = C_0(p,Q)>0 \textrm{ and } \lambda =\lambda(p,Q), \mu=\mu(p,Q) \in (0,1)$ such that for every function $u$ and $f$ satisfying  \eqref{pbm},  $\forall r\in(0,1]$, the following estimates hold
\begin{equation} \label{decayest}
\hat {N}(u, \lambda^2 r,\lambda r)<\mu \ \hat {N}(u,\lambda r,r)   \qquad{\text or} \qquad \hat {N}(u, \lambda^2 r,\lambda r)<C_0 \ \tilde {\omega}(f,\lambda^2 r,r).
\end{equation}
\end{proposition}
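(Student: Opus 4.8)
The plan is to argue by contradiction, blow-up and compactness, which is the standard route for decay estimates of this type. Suppose \eqref{decayest} fails. Then for every choice of candidate constants $C_0, \lambda, \mu$ (in particular along sequences $C_0^{(k)} \to \infty$, and for a fixed small $\lambda$ to be chosen afterwards and $\mu$ to be chosen small depending only on $p, Q$) there exist radii $r_k \in (0,1]$ and pairs $(u_k, f_k)$ solving \eqref{pbm} on the relevant cylinder such that simultaneously
\[
\hat N(u_k, \lambda^2 r_k, \lambda r_k) \ge \mu \, \hat N(u_k, \lambda r_k, r_k)
\qquad\text{and}\qquad
\hat N(u_k, \lambda^2 r_k, \lambda r_k) \ge C_0^{(k)}\, \tilde\omega(f_k, \lambda^2 r_k, r_k).
\]
Using the dilation and translation invariance of $\L$ encoded in \eqref{Ginv} and \eqref{grouplaw}, rescale so that $r_k = 1$; this is legitimate because $\hat N$ and $\tilde\omega$ behave homogeneously under $\delta_r$ (their normalizing powers of $r$ are built precisely so that the rescaled $u$, after subtracting the optimal polynomial, still solves an equation of the same form with a rescaled right-hand side). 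After rescaling, normalize: set $\varrho_k := \hat N(u_k, \lambda, 1)$ and replace $u_k$ by $(u_k - P_k)/\varrho_k$ where $P_k \in \mathcal P_{c_1}$ is near-optimal at scale $1$; call the result $v_k$. Then $\hat N(v_k, \lambda, 1) \simeq 1$, $\hat N(v_k, \lambda^2, \lambda) \ge \mu \cdot (\text{something} \simeq 1)$, while $\tilde\omega$ of the rescaled source tends to $0$ (because the second inequality forces $\tilde\omega(f_k,\cdot,\cdot)/\hat N(v_k,\lambda^2,\lambda) \le 1/C_0^{(k)} \to 0$, and $\hat N(v_k,\lambda^2,\lambda)$ is bounded by a constant times $\hat N(v_k,\lambda,1)\simeq 1$). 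So $\L v_k = g_k$ with $g_k$ having $\tilde\omega(g_k,\cdot)\to 0$ and with $v_k$ bounded in the relevant $L^p$ sense.

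Next I would obtain uniform compactness. The Caccioppoli estimate (Lemma~\ref{cacioppolilemma}) applied on nested cylinders $\H_\rho^- \subset \H_1^-$ controls $\int_{\H_\rho^-} |D_m W_k|^2$ — where $W_k = (v_k - P)|v_k - P|^{p/2 - 1}$ for an optimal $P$ — in terms of $\|v_k - P\|_{L^p}$ and $\tilde\omega(g_k, \cdot)$, both of which are uniformly bounded. Combined with the interior $L^p$-estimates (Theorem with \eqref{intpar}) and the compactness result Theorem~\ref{compthm} (the $L^q$-continuity of translates $u(\cdot\circ h) - u$ with a quantitative modulus in $\|h\|$), this yields that, up to a subsequence, $v_k \to v_\infty$ strongly in $L^p_{\loc}(\H_1^-)$, and $D_m v_k$, $D^2_m v_k$, $Y v_k$ converge weakly. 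Passing to the limit in the equation, and using $\tilde\omega(g_k,\cdot)\to 0$ together with $g_k(0) = 0$, the limit satisfies $\L v_\infty = 0$ in $\H_1^-$ (the source disappears in the limit because its mean oscillation vanishes and its value at the origin is $0$). Moreover the convergence is strong enough that $\hat N(v_k, \lambda, 1)\to \hat N(v_\infty, \lambda, 1)$ and $\hat N(v_k,\lambda^2,\lambda)\to\hat N(v_\infty,\lambda^2,\lambda)$, and the normalization survives: $\hat N(v_\infty,\lambda,1)$ is bounded below by a positive constant, while still $\hat N(v_\infty,\lambda^2,\lambda)\ge \mu\,(\text{const})$.

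Finally I would derive a contradiction from the regularity of the limit. Since $\L v_\infty = 0$, hypoellipticity makes $v_\infty$ smooth, and one has a Taylor-type expansion: near the origin $v_\infty$ is approximated, to order two in $x_1,\dots,x_m$ and order one in $t$, by a polynomial $P_\infty \in \tilde{\mathcal P}$ with $\L P_\infty = c$ for the appropriate constant $c = c_1$ (this is the classical intrinsic Taylor expansion for $\L$-caloric functions, available because smooth solutions of $\L v = c$ admit such jets; cf. the Schauder theory cited). This gives a quantitative decay $\hat N(v_\infty, \sigma) \le C_*\,\sigma^\beta\, \hat N(v_\infty, 1)$ for all small $\sigma$, with $C_*$ and $\beta$ depending only on $p, Q$. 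Choosing $\lambda$ small so that $C_* \lambda^\beta < \tfrac12$ (say), and then $\mu < \tfrac12 \cdot (\text{the constant relating } \hat N(\cdot,\lambda^2,\lambda) \text{ to } \hat N(\cdot,\lambda^2))$, the lower bound $\hat N(v_\infty, \lambda^2,\lambda) \ge \mu\,(\text{const})\,\hat N(v_\infty,\lambda,1)$ contradicts the decay estimate $\hat N(v_\infty,\lambda^2,\lambda) \le C_*\lambda^{2\beta}\hat N(v_\infty,\lambda,1) < \hat N(v_\infty,\lambda,1)$ once the constants are fixed in the right order: first $\lambda$ from the regularity of $\L$-caloric functions, then $\mu$, then $C_0$ is free to be taken large. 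I expect the main obstacle to be the compactness step — specifically, making sure the rescaled solutions have enough uniform control (via Caccioppoli plus the interior estimates plus Theorem~\ref{compthm}) that the limit inherits the normalization $\hat N(v_\infty, \lambda, 1) \ge c > 0$ rather than degenerating to zero, and simultaneously that the optimal polynomials $P_k$ subtracted at each scale stay bounded so the limit object is nontrivial and still solves a clean homogeneous equation. The bookkeeping of the quasi-distance, the anisotropic dilations $\delta_r$, and the fact that only the first $m$ spatial directions are differentiated twice must be handled with care throughout, but the logical skeleton is the contradiction-compactness-Liouville/decay trichotomy above.
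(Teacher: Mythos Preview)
Your argument is correct but follows a genuinely different route from the paper's. Both proceed by contradiction, blow-up and compactness, but the rescalings and the endgame differ. The paper lets $\lambda_k\to 0$ and $\mu_k\to 1$ and rescales at the \emph{inner} radius $\varrho_k\in[\lambda_k^2 r_k,\lambda_k r_k]$ where the supremum $\hat N(u_k,\lambda_k^2 r_k,\lambda_k r_k)$ is attained; since $\varrho_k/r_k\to 0$, the rescaled domain exhausts the whole past half-space, and the crucial step is Lemma~\ref{large} (estimates on \emph{larger} cylinders), which yields the logarithmic growth bound \eqref{log-bound} for $w_k$ on arbitrarily large cylinders. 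The limit $w_\infty$ is then an entire $\L$-caloric function of sub-cubic growth, and a Liouville-type conclusion forces $w_\infty\in\P$, contradicting the normalization \eqref{one}. You instead keep $\lambda$ fixed, rescale at the \emph{outer} radius $r_k$, and obtain a limit $v_\infty$ on the bounded cylinder $\H_1^-$; the contradiction then comes from local regularity: hypoellipticity and the intrinsic second-order Taylor expansion (equivalently, interior Schauder estimates) give $\hat N(v_\infty,\sigma)\le C_*\sigma$ with $C_*$ controlled by $\|v_\infty\|_{L^p(\H_{1/2}^-)}\le 1$, hence universal, which clashes with the surviving lower bound $\hat N(v_\infty,\lambda^2,\lambda)\ge\mu$ once $\lambda<\mu/C_*$. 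Your route sidesteps Lemma~\ref{large} entirely and keeps the compactness on a fixed domain, at the cost of importing a quantitative Taylor/Schauder estimate for $\L$-caloric functions as a black box; the paper's route is more self-contained (Lemma~\ref{large} is proved in-house and reused later), but relies on a Liouville statement at the end. One point you should make explicit: the universality of $C_*$ (independent of the particular limit $v_\infty$) is what makes the order of choices --- first $\lambda$, then $\mu$, then $C_0\to\infty$ --- non-circular.
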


\begin{proof}
The proof is carried out by contradiction. Namely, if \eqref{decayest} is not true, we can find the sequences $C_k\to \infty, r_k\in(0,1],\lambda_k\to 0$ and $\mu_k\to 1$ such that
 \begin{align}\label{contra}
\hat {N}(u_k, \lambda_k^2 r_k,\lambda_k r_k)&\ge \mu_k \hat {N}(u_k,\lambda_k r_k,r_k)\\
\hat {N}(u_k, \lambda_k^2 r_k,\lambda_k r_k)&\ge C_k \tilde {\omega}(f_k, \lambda_k^2 r_k, r_k),
\end{align}
where $(f_k)_k$ and $(u_k)_k$ satisfy \eqref{pbm}.
Let us consider $\r_k\in[\lambda_k^2 r_k, \lambda_k r_k]$ such that, according to \eqref{nro} 
\begin{align}\label{defeps}
\hat {N}(u_k, \lambda_k^2 r_k,\lambda_k r_k)&=\hat {N}(u_k, \r_k)=\colon \e_k.
\end{align}
Moreover, owing to \eqref{e-dilations}, we define the rescaled functions
 \begin{align*}
v_k(x,t)&= \frac{u_k(\delta_{\r_k}( x,t))}{\r_k^2}
\end{align*}
and 
 \begin{align}\label{defw}
w_k(x,t)&= \frac{u_k(\delta_{\r_k}( x,t))-P_k(\delta_{\r_k}( x,t))}{\e_k \r_k^2}
\end{align}
where  $P_k \in \P_{c \r_k}$ is the homogeneous polynomial realizing the infimum at the level $\r_k$.

Now we want to control $w_k$ in order to pass to the limit. We first notice that
\begin{equation}\label{inf1}
\inf_{P\in \P} \Big(\int_{\H_1^-} |w_k -P|^p \Big)^{\frac1p}=1. 
\end{equation}

Indeed, first exploiting the definition of $w_k$ in \eqref{defw} and then using the change of variables $y = \delta_{\r_k}^0(x), \ s= \r_k^2 t$, owing to \eqref{fam-dil-space}, we infer
\begin{align*}
&\inf_{P\in \P} \Big(\int_{\H_1^-} |w_k -P|^p \Big)^{\frac1p}\\
=&\inf_{P\in\P} \Big(\int_{\H_1^-} \left| \dfrac{u_k(\delta^0_{\r_k}(x),\r_k^2 t)-P_k(\delta^0_{\r_k}(x),\r_k^2 t)-\e_k \r_k^2 P(x,t)}{\e_k \r_k^2}\right|^p dx\ dt\Big)^{1/p}\\
=&\frac{1}{\e_k}\inf_{P\in\P} \Big( \frac{1}{\r_k^{Q+2+2p}}\int_{\H_{\r_k}^-} \left| {u_k(y,s)-P_k(y,s)-\e_k \r_k^2 P\left(\delta^0_\frac{1}{\r_k}(y),\frac{1}{\r_k^2}s \right)}\right|^p dy\ ds\Big)^{1/p}.
\end{align*}
Now, since $\L(P_k+\e_k \r_k^2 P)= \L P_k +\e_k \r_k^2 \L P = c_{\r_k}$ by \eqref{p} and \eqref{pc}, the identity \eqref{inf1} follows from \eqref{defeps}.

In addition it holds
 \begin{align*}
\hat {N}(v_k, 1)&=\Big(\frac1{\r_k^{Q+2+2p}}\int_{\H_{\r_k}^-} |u_k -P_k|^p \Big)^{\frac1p}.
\end{align*}

We now apply Lemma \ref{large} to $v_k$, for $s \in \left[1,\frac{r_k}{2\r_k}\right]$
\begin{align}\nonumber
\Big(\frac1{(s{\r_k)}^{Q+2+2p}}\int_{\H_{s\r_k}^-} |u_k -P_k|^p \Big)^{\frac1p}&=\Big(\frac1{s^{Q+2+2p}}\int_{\H_{s}^-} \left| \dfrac{u_k(\delta_{\r_k}(y,s))-P_k(\delta_{\r_k}(y,s))}{ \r_k^2}\right|^p\Big)^{\frac1p}\\ \nonumber
&\le C_1\int_{1}^{4s}\frac{\hat {N}(v_k, \tau )+ \tilde {\omega}(g_k, \tau )}{\tau} d\tau\\ \label{intv1}
&\le C_1\int_{1}^{4s}\frac{\hat {N}(u_k, \tau \r_k)+ \tilde {\omega}(f_k, \tau \r_k)}{\tau} d\tau,
\end{align}
where in the second line we defined $g_k(x,t)=f_k(\delta_{\r_k}(x,t))$ and in the third line we used the identities $\hat{N}(v_k,s)=\hat{N}(u_k,s\r_k)$ and $\tilde{\omega}(g_k,s)=\tilde{\omega}(f_k,s\r_k)$. As a consequence, for $s \in \left[1,\frac{r_k}{2\r_k}\right]$, the following holds 
\begin{align}\nonumber
\Big(\frac1{s^{Q+2+2p}}\int_{\H_{s}^-} |w_k |^p \Big)^{\frac1p}&\le \frac{C_1}{\e_k}\int_{1}^{4s}\frac{\hat {N}(u_k, \tau \r_k)+ \tilde {\omega}(f_k, \tau \r_k)}{\tau} d\tau\\ \label{intv}
&\le \frac{C_1}{\e_k}\int_1^{4s}\frac{\hat {N}(u_k, \lambda_k^2 r_k, r_k)+\tilde {\omega}(f_k, \lambda_k^2 r_k, r_k)}{\tau} d\tau.
\end{align}

\noindent On the other hand, combining \eqref{contra} with \eqref{defeps}, we obtain
\begin{equation*}
\hat {N}(u_k, \lambda_k^2 r_k, r_k)\le \frac{\e_k}{\mu_k}
\end{equation*}
and
\begin{equation}\label{ome}
\tilde{\omega}(f_k, \lambda_k^2 r_k, r_k)\le \frac{\e_k}{C_k}.
\end{equation}
These two bounds together with \eqref{intv}, yield to
\begin{equation}\label{log-bound}
\Big(\frac1{s^{Q+2+2p}}\int_{\H_{s}^-} |w_k |^p \Big)^{\frac1p}\le C_2 \ln 4s
\end{equation}
where $s\in \left[1,\frac{r_k}{2\r_k}\right]$ and $C_2$ is a positive constant depending on $C_1, C_k$ and $\mu_k$.
%
%\greybox{
%Indeed, the right hand side of \eqref{intv} reads
%\begin{align*}
%\Big(\frac1{s^{Q+2+2p}}\int_{\H_{s}^-} | w_k |^p \Big)^{\frac1p} &\leq \frac{C_1}{\e_k}\int_1^{4s}\Big(\frac{\e_k}{\tau \mu_k}+\frac{\e_k}{\tau C_k}\Big) d \tau \\
%&= C_2\int_1^{4s}\frac{1}{\tau} d \tau = C_2\ln 4s.
%\end{align*}
%}
Now, according to the dilation invariance of $\L$ with respect to $\delta_r$ (see \eqref{Ginv}) and \eqref{ome}, we find
\begin{equation}\label{sto}
\Big(\frac1{|\H_{s}^-|}\int_{\H_{s}^-} |\L w_k |^p \Big)^{\frac1p}\le \frac1{\e_k}\tilde{\omega}(f_k, s \r_k)\le \frac1{\e_k}\tilde{\omega}(f_k, \lambda_k^2 r_k, r_k)\le \frac1{C_k}\to 0
\end{equation}

The contradiction follows from passing to the limit. In order to do so, we need a compactness argument.
%from estimates \eqref{intpar}, \eqref{log-bound} and \eqref{sto}, we can extract a non-relabelled subsequence $w_k$ which converges to $w_\infty$ in $S^p(\H_s)$. 
Applying Lemma \ref{cacioppolilemma} to $W_k=w_k|w_k|^{\frac{p}{2}-1} $, we obtain that for every $R \in \left[1,\frac{r_k}{\r_k}\right)$
\begin{align*} 
&\frac{2(p-1)}{   p^2} \int _{\H^-_{R}} |D_m W_k|^2 \\
&\quad\le \left(\frac{2}{(p-1)}\,\frac{ c_2^2}{\left(r-\r\right)^2}+\frac{2 }{p }\,c_1 \,\frac{r^{2\kappa+1}}{r-\r}\right)  \int _{\H^{-}_r} W_k^2 +\tilde{\omega}(f,r) \,|\H_{r}^-| \,\Big( \frac1{|\H^-_r|}\int_{\H^-_r} \eta^{2p'} W_k^2\Big)^{\frac1{p'}},
\end{align*}
for $r=\frac{r_k}{\r_k}$. As a consequence, for every $R \in \left(0,\frac{r_k}{\r_k}\right)$, we have
\begin{eqnarray*}
\parallel W_k \parallel_{S^2(\H_R^-)}\leq C_R.
\end{eqnarray*}
Thus, we can extract a non-relabelled subsequence $W_k$ such that
\begin{align*}
W_k \rightharpoonup W_\infty=w_\infty|w_\infty|^{\frac{p}{2}-1}\qquad \textrm{weakly in $S^2_{loc}(\H_R^-)$},
\end{align*}
where we denoted by $w_\infty$ the limit of the sequence $w_k$. 
Moreover, from the compact embedding provided by Theorem \ref{compthm} it follows that
\begin{align*}
W_k \to W_\infty=w_\infty|w_\infty|^{\frac{p}{2}-1}\qquad \textrm{in $L^2_{loc}(\H_R^-)$}.
\end{align*}
We now observe that
\begin{eqnarray*}
\parallel w_k \parallel^p_{L^p(\H_R^-)}=\parallel W_k \parallel^2_{L^2(\H_R^-)}, \qquad \parallel w_\infty \parallel^p_{L^p(\H_R^-)}=\parallel W_\infty \parallel^2_{L^2(\H_R^-)},\qquad
\end{eqnarray*}
and therefore we have the following convergence result for every $R \in \left(0,\frac{r_k}{\r_k}\right)$
\begin{eqnarray*}
w_k \to w_\infty \qquad \textrm{in $L^p_{loc}(\H_r^-)$}.
\end{eqnarray*}

In particular, from \eqref{inf1}, $w_\infty$ satisfies
\begin{equation}\label{one}
\inf_{P\in\mathcal{P}} \Big(\int_{\H_1^-} |w_\infty -P|^p \Big)^{\frac1p}=1.
\end{equation}
Similarly, according to \eqref{log-bound},
\begin{equation*}
\Big(\frac1{s^{Q+2+2p}}\int_{\H_{s}^-} |w_\infty |^p \Big)^{\frac1p}\le C_2 \ln 4s.
\end{equation*}
Hence, $w_\infty$ is a function that grows quadratically in space and linearly in time up to a logarithmic correction. Moreover, in virtue of \eqref{sto}, it follows that $w_\infty$ a. e. belongs to $\P$.
This contradicts \eqref{one} and therefore concludes the proof.
\end{proof}

We now establish a sort of monotonicity result for $\underline{N}$ and $\underline{\omega}$, defined in \eqref{nbar} and \eqref{ombar}, respectively.
\begin{proposition}(Dini estimate)\label{dini-proposition}\\
Let $N:(0,1] \rightarrow [0,+\infty)$, $\omega:(0,1] \rightarrow [0,+\infty)$ be two functions that satisfy
\begin{equation} \label{dini-decay}
\forall r \in (0,1], \qquad {N}(\lambda r) < \mu \, {N}(r)   \qquad{\text or} \qquad {N}(\lambda r) <\underline{C} \, {\omega}(r),
\end{equation}
and
\begin{equation}\label{dini-mon}
\forall r \in (0,1] \quad \forall \alpha \in [\lambda,1],\qquad \left\{ \begin{array}{ll}
{N}(\alpha r)\leq \underline{C} \, ({N}(r)+{\omega}(r)),\\
{\omega}(\alpha r)\leq \underline{C} \, {\omega}(r)
\end{array} \right.
\end{equation}
for some constants $\underline{C} >0$ and for $\lambda, \mu \in (0,1)$. Moreover, we assume that ${\omega}$ is Dini. Then for every $\r \in (0,\frac{\lambda}{4})$ and for $\beta=\frac{\ln \mu}{\ln \lambda}$ we have
\begin{equation}\label{dini-est}
\begin{split}
&\int_0^{4\r}\frac{{N}(r)}{r}dr \leq \\ &\qquad\underline{C} \, \frac{1}{\beta}\left\lbrace \left(\frac{{4\r}}{\lambda}\right)^{\beta} \left({N}(1)+{\omega}(1)\right) +\underline{C}'\left(\int_0^{4\r} \frac{{\omega}(r)}{r}dr + \r^{\beta}\int^1_{4\r}\frac{{\omega}(r)}{r^{1+\beta}}dr \right)\right\rbrace.
\end{split}
\end{equation}
where $\underline{C}'=\underline{C}'( \lambda, \mu)=\frac{1}{\mu}\frac{1}{(1-\lambda)\lambda^\beta}$.
%only depending on $\underline{C}, \lambda, \mu$ such that
\end{proposition}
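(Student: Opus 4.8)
The plan is to iterate the dichotomy \eqref{dini-decay} along the geometric sequence of radii $\lambda^k$, $k\ge 0$, and to turn the resulting discrete control on $N(\lambda^k)$ into the integral bound \eqref{dini-est}. First I would fix $\r\in(0,\lambda/4)$, pick the integer $n\ge 1$ with $\lambda^{n+1}<4\r\le\lambda^{n}$, and split the integral dyadically:
\[
\int_0^{4\r}\frac{N(r)}{r}\,dr=\sum_{k\ge n}\int_{\lambda^{k+1}}^{\lambda^{k}}\frac{N(r)}{r}\,dr
\le \sum_{k\ge n}\Big(\ln\tfrac1\lambda\Big)\sup_{r\in[\lambda^{k+1},\lambda^k]}N(r),
\]
and then use \eqref{dini-mon} with $\alpha=r/\lambda^k\in[\lambda,1]$ to replace the supremum by $\underline C\,(N(\lambda^k)+\omega(\lambda^k))$. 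So everything reduces to estimating $\sum_{k\ge n}N(\lambda^k)$ and $\sum_{k\ge n}\omega(\lambda^k)$, the latter being comparable to $\int_0^{4\r}\omega(r)/r\,dr$ by \eqref{dini-mon} again.

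The core is the bound on $N(\lambda^k)$. Applying \eqref{dini-decay} at $r=\lambda^{k-1}$ gives, for each $k\ge1$, either $N(\lambda^k)<\mu N(\lambda^{k-1})$ or $N(\lambda^k)<\underline C\,\omega(\lambda^{k-1})$. Iterating this, I would prove by induction the ``worst-case'' estimate
\[
N(\lambda^k)\le \mu^k\,N(1)+\underline C\sum_{j=0}^{k-1}\mu^{\,k-1-j}\,\omega(\lambda^{j}),
\]
i.e. each time the second alternative is invoked we restart the geometric decay from the corresponding $\omega$ value. Summing over $k\ge n$: the first term contributes $\sum_{k\ge n}\mu^k=\mu^n/(1-\mu)$, and since $\lambda^n<4\r/\lambda$ and $\beta=\ln\mu/\ln\lambda$ (so $\mu^n=\lambda^{n\beta}$), this is $\lesssim(4\r/\lambda)^\beta$, which after also folding in $N(1)+\omega(1)$ and the factor $1/\beta$ coming from $\ln(1/\lambda)=\ln\mu/\beta$ produces the first bracketed term of \eqref{dini-est}. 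For the double sum, exchanging the order of summation,
\[
\sum_{k\ge n}\sum_{j=0}^{k-1}\mu^{k-1-j}\omega(\lambda^j)
=\sum_{j\ge 0}\omega(\lambda^j)\sum_{k\ge\max(n,j+1)}\mu^{k-1-j},
\]
and the inner geometric sum is $\frac{1}{1-\mu}$ when $j\ge n$ and $\frac{\mu^{n-1-j}}{1-\mu}$ when $j<n$. The first range gives $\frac1{1-\mu}\sum_{j\ge n}\omega(\lambda^j)\lesssim\int_0^{4\r}\omega(r)/r\,dr$; the second gives $\frac{\mu^{n-1}}{1-\mu}\sum_{j<n}\mu^{-j}\omega(\lambda^j)$, and writing $\mu^{-j}=\lambda^{-j\beta}$ and comparing $\lambda^{-j\beta}\omega(\lambda^j)$ with $\int_{\lambda^{j}}^{\lambda^{j-1}}\omega(r)r^{-1-\beta}\,dr$ (using \eqref{dini-mon} to control $\omega$ on the dyadic block by $\omega(\lambda^j)$ up to $\underline C$) converts it into $\r^\beta\int_{4\r}^1\omega(r)r^{-1-\beta}\,dr$. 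Tracking the constants from these two comparisons yields the stated $\underline C'=\frac1\mu\frac1{(1-\lambda)\lambda^\beta}$.

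The main obstacle I anticipate is bookkeeping rather than conceptual: one must carry the constants $\underline C$, $\mu$, $\lambda$ through the three comparison steps (discrete-to-integral for $N$ via \eqref{dini-mon}, the geometric resummation, and discrete-to-integral for $\omega$ with the weight $r^{-1-\beta}$) so that the final constant matches $\underline C\,\frac1\beta\{\cdots\}$ with $\underline C'=\frac1\mu\frac1{(1-\lambda)\lambda^\beta}$ exactly; in particular the split of the double sum at the index $n$ is what manufactures the two distinct $\omega$-integrals (one near $0$, one over $[4\r,1]$), and getting the endpoints and the factor $\lambda^{\pm\beta}$ right there is the delicate point. A minor technical care is that \eqref{dini-decay} and \eqref{dini-mon} are only assumed for $r\in(0,1]$, so the induction for $N(\lambda^k)$ starts cleanly from $k=0$ with $N(1)$ and never needs values at radii $>1$, and the Dini hypothesis on $\omega$ is exactly what guarantees $\sum_{j\ge0}\omega(\lambda^j)<\infty$, hence the finiteness of all the sums above.
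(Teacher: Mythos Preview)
Your proposal is correct and follows essentially the same core idea as the paper --- iterate the dichotomy \eqref{dini-decay} along the geometric scale $\lambda^k$ and convert to integrals --- but the organization is genuinely different. The paper first establishes a \emph{pointwise} bound: for every $r\in(0,\lambda]$,
\[
N(r)\le \max\Big(\underline C_1 r^\beta,\ \underline C\,\tfrac1\mu\,r^\beta\!\!\sup_{\rho\in[r,\lambda]}\tfrac{\omega(\rho)}{\rho^\beta}\Big),
\qquad \underline C_1=\underline C\,\lambda^{-\beta}(N(1)+\omega(1)),
\]
obtained by unfolding the dichotomy as a nested $\max$ rather than as a sum; it then bounds the $\sup$ by $\underline C_2\int_r^1 \omega(\rho)\rho^{-1-\beta}\,d\rho$ via the almost-monotonicity \eqref{dini-mon}, and finally integrates $N(r)/r$ and uses \emph{integration by parts} on the resulting double integral to split off the two $\omega$-integrals. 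Your route replaces the $\max$ by the larger convolution sum $\mu^kN(1)+\underline C\sum_{j<k}\mu^{k-1-j}\omega(\lambda^j)$, and then does the splitting via Fubini on the double sum rather than by parts; the two computations are dual to each other.

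What each buys: your discrete Fubini argument is arguably more transparent about why the two $\omega$-terms appear (the index cutoff $j\lessgtr n$ maps directly to the two ranges of integration), and the constant bookkeeping you flag is indeed the only real work. The paper's route, on the other hand, isolates the pointwise estimate \eqref{max-est} as a standalone inequality, and this is not incidental: that pointwise bound is reused verbatim later in the paper to prove parts (i) and (ii) of Theorem~\ref{mainthm} (the BMO and VMO pointwise estimates), where one needs control of $N(r)$ itself and not just of its Dini integral. So if you adopt your version, be aware that you will still need to extract a pointwise statement of the form $N(r)\lesssim r^\beta\big(N(1)+\omega(1)+\sup_{\rho\ge r}\omega(\rho)\rho^{-\beta}\big)$ separately for those applications.
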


\begin{proof}
We first prove that for all $r \in (0,\lambda]$, we have
\begin{eqnarray}\label{max-est}
{N}(r)\leq \max \left(\underline{C}_1 r^{\beta},\underline{C} \, \frac{1}{\mu} \, r^{\beta}\sup_{\r \in [r,\lambda]}\frac{{\omega}(\r)}{\r^{\beta}} \right),
\end{eqnarray}
where $\underline{C}_1$ is given by
\begin{eqnarray}\label{def-C_1}
\underline{C}_1=\underline{C} \, \lambda^{-\beta} \, \left({N}(1)+{\omega}(1)\right).
\end{eqnarray}

If $r \leq \lambda$, we write it as $r=\lambda^k r_1$ with $k \geq 1$ and $r_1 \in (\lambda,1]$. Then, taking advantage of \eqref{dini-decay}, we infer
\begin{align}\label{decay-1} \nonumber
{N}(r) &\leq \max \left(\underline{C} \,{\omega}\left(\frac{r}{\lambda}\right),\mu \, {N}\left(\frac{r}{\lambda}\right)\right)\\
&\leq \max \left(\underline{C} \,{\omega}\left(\frac{r}{\lambda}\right),\underline{C} \, \mu \,{\omega}\left(\frac{r}{\lambda^2}\right), \mu^2 \, {N}\left(\frac{r}{\lambda^2}\right)\right)\\ \nonumber
&\leq \max \left(\underline{C} \,{\omega}\left(\frac{r}{\lambda}\right),\underline{C} \, \mu \,{\omega}\left(\frac{r}{\lambda^2}\right),\underline{C} \, \mu^2 \,{\omega}\left(\frac{r}{\lambda^3}\right), \ldots, \underline{C} \, \mu^{k-2} \,{\omega}\left(\frac{r}{\lambda^{k-1}}\right) ,\mu^k \, {N}\left(\frac{r}{\lambda^k}\right)\right).
\end{align}
Now, if we set $\beta=\frac{\ln \mu}{\ln \lambda}$ and $\r=\frac{r}{\lambda^{j+1}}$ for $j=0,\ldots,k-2$, we deduce
\begin{equation}\label{decay-2}
\begin{split}
\mu^j \, {\omega}\left(\frac{r}{\lambda^{j+1}}\right) = e^{j \ln \mu}{\omega}(\r)=\mu^{-1} \, e^{\ln(r/\r)\beta} \, {\omega}(\r)=\mu^{-1} \frac{{\omega}(\r)}{\r^{\beta}}r^{\beta}.
\end{split}
\end{equation}
On the other hand, according to \eqref{dini-mon} and \eqref{def-C_1}, we have
\begin{equation}\label{decay-3}
\begin{split}
\mu^k \, {N}\left(\frac{r}{\lambda^k}\right) \leq \mu^k\underline{C} ({N}(1)+{\omega}(1))=\underline{C}_1 \, \mu^k \lambda^{\beta} \leq \underline{C}_1 \, \mu^k r_1^{\beta} =\underline{C}_1 \, \mu^k \left(\frac{r}{\lambda^k}\right)^{\beta} \leq \underline{C}_1 \, r^{\beta}.
\end{split}
\end{equation}
Finally, using estimates \eqref{decay-2} and \eqref{decay-3} in \eqref{decay-1}, we get \eqref{max-est}.

We now want to estimate $\sup_{\r \in [r,\lambda]}\frac{{\omega}(\r)}{\r^{\beta}}$. To this end, for some $\r_0 \in [r,\lambda]$, we write
\begin{equation*}
\begin{split}
\sup_{\r \in [r,\lambda]}\frac{{\omega}(\r)}{\r^{\beta}}&=\frac{{\omega}(\r_0)}{\r_0^\beta} \\
&\leq \frac{1}{\r_0^\beta}\frac{1}{t \r_0}\int_{\r_0}^{\r_0+t \r_0}\underline{C} \, {\omega}(\r)d\r\\
&\leq \frac{\underline{C}}{t \lambda^{1+\beta}}\int_{\r_0}^{\r_0/\lambda}\frac{{\omega}(\r)}{\r^{1+\beta}}d\r\\
&\leq \underline{C}_2 \, \int_{r}^1 \frac{{\omega}(\r)}{\r^{1+\beta}}d\r,
\end{split}
\end{equation*}
where in the second line we have used the monotonicity of ${\omega}$ according to \eqref{dini-mon} and the constants $t$ and $\underline{C}_2$ appearing in the second and forth line are equal to $(1-\lambda)/\lambda$ and $\underline{C}/((1-\lambda)\lambda^\beta)$ respectively.

Combining the previous inequality with \eqref{max-est} and setting $\underline{C}':=\frac{1}{\mu} \, \underline{C}_2$, we obtain for any $\r \in \left(0,\frac{\lambda}{4}\right)$
\begin{equation}\label{N/r-est}
\int_0^{4\r} \frac{{N}(r)}{r}dr \leq \underline{C}_1 \int_0^{4\r} r^{\beta-1}dr + \underline{C} \, \underline{C}' \, J,
\end{equation}
with
\begin{equation}\label{def-J}
\begin{split}
J:&=	\int_0^{4\r} r^{\beta-1} dr\left(\int_r^1 \frac{{\omega}(\tau)}{\tau^{1+\beta}}d\tau \right)\\ 
&\leq\int_0^{4\r} \frac{r^\beta}{\beta} \frac{ {\omega}(r)}{r^{1+\beta}}d r + \left[ \frac{r^\beta}{\beta}\left(\int_r^1 \frac{{\omega}(\tau)}{\tau^{1+\beta}}d\tau \right) \right]_0^{4\r}\\
&=\frac{1}{\beta}\int_0^{4\r} \frac{{\omega}(r)}{r}dr+\frac{{(4\r)}^\beta}{\beta}\left(\int^1_{4\r} \frac{ {\omega}(\tau)}{\tau^{1+\beta}}d\tau \right),
\end{split}
\end{equation}
where in the second line we have integrated by parts and in the third we have applied the dominated convergence theorem.

Inequality \eqref{N/r-est}, together with \eqref{def-J} and the definition of $\underline{C}_1$ in \eqref{def-C_1}, yields
\begin{equation*}
\begin{split}
\int_0^{4\r} \frac{{N}(r)}{r}dr \leq \underline{C}\left(\frac{{4\r}}{\lambda}\right)^\beta \frac{1}{\beta}({N}(1)+{\omega}(1))+ \underline{C} \underline{C}'\left(\frac{1}{\beta}\int_0^{4\r} \frac{{\omega}(r)}{r}dr+\frac{({4\r})^\beta}{\beta}\left(\int^1_{4\r} \frac{{\omega}(\tau)}{\tau^{1+\beta}}d\tau \right)\right),
\end{split}
\end{equation*}
which concludes the proof. 
\end{proof}

\begin{remark}
We observe that hypothesis \eqref{dini-mon} could be substituted by \eqref{nmon} and therefore owing to Proposition \ref{decayprop}, the previous result Proposition \ref{dini-proposition} holds in parti-cular for $\hat{N}$ and $\tilde{\omega}$.
\end{remark}

\begin{remark} 
We notice that:
\begin{itemize}
\item[1.] the quantities $\underline{N}$ and $\underline{\omega}$ defined in \eqref{nbar} and \eqref{ombar} satisfy \eqref{dini-decay} in virtue of Proposition \ref{decayprop}. Moreover, \eqref{nmon} with $\gamma=\frac{1}{\lambda}$ implies that $\underline{N}$ and $\underline{\omega}$ also satisfy \eqref{dini-mon};
\item[2.] we chose the extremes of integration in order to combine effortlessly this result with the following Lemma \ref{leminterm}.
\end{itemize}
\end{remark}

We now focus on the following result, which differs from Lemma \ref{large} in the choice of the polynomial and of $\r$. More precisely, in Lemma \ref{large}, we derive an estimate on large cylinders, while we here consider smaller radii.

\begin{lemma}(Estimates on smaller cylinders)\label{leminterm}\\
If $u$ is defined in $\H_1^-$, then there exist a polynomial $P_0 \in \tilde{\P}$ such that for every $\r \in (0,\frac{1}{4})$, we have
\begin{equation*}
\left(\frac{1}{\r^{Q+2+2p}}\int_{\H_\r^-}|u-P_0|^p\right)^{\ip}\leq C_1 \int_0^{4\r}\frac{\hat{N}(r)+\tilde{\omega}(f,r)}{r}dr.
\end{equation*}
\end{lemma}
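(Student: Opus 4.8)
The plan is to mirror the structure of Lemma \ref{large}, but now summing from a small dyadic radius up to the unit scale, so that the geometric series on small cylinders becomes a telescoping sum of the oscillations $\hat N(u,\cdot)$ and $\tilde\omega(f,\cdot)$. Fix $\r\in(0,\tfrac14)$ and write $\r=2^{-k}r$ with $r\in[\tfrac12,1)$ and $k\ge1$ an integer. For each dyadic level let $P_{2^{-j}r}\in\mathcal{P}_{c_{2^{-j}r}}$ be a polynomial realizing the infimum in \eqref{ncap} at scale $2^{-j}r$, and set $P_0:=P_{r}$ (equivalently, the polynomial at the largest of these scales, which will serve as the claimed $P_0\in\tilde{\P}$). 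Then by the triangle inequality in $L^p$,
\begin{equation*}
\left(\frac{1}{\r^{Q+2+2p}}\int_{\H_\r^-}|u-P_0|^p\right)^{\ip}
\le \hat N(u,\r)+\left(\frac{1}{\r^{Q+2+2p}}\int_{\H_\r^-}|P_\r-P_0|^p\right)^{\ip},
\end{equation*}
and the second term is bounded, again by the triangle inequality, by $\sum_{j=0}^{k-1}\big(\r^{-(Q+2+2p)}\int_{\H_\r^-}|P_{2^{-(j+1)}r}-P_{2^{-j}r}|^p\big)^{\ip}$.

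\textbf{Key step: comparing consecutive polynomials.} The crux is to estimate, for two nested scales $2^{-1}\sigma\le\sigma$, the quantity $\big(\r^{-(Q+2+2p)}\int_{\H_\r^-}|P_{\sigma/2}-P_\sigma|^p\big)^{\ip}$ where $\r\le\sigma/2$. First I would pass from the cylinder $\H_\r^-$ to the cylinder $\H_{\sigma/2}^-$: since $P_{\sigma/2}-P_\sigma\in\mathcal{P}_{c_{\sigma/2}-c_\sigma}=(c_{\sigma/2}-c_\sigma)P_*+\mathcal{P}$, I split it as $(c_{\sigma/2}-c_\sigma)P_*$ plus a polynomial $R\in\mathcal{P}\subset\tilde\P$. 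For the $\tilde\P$-part I invoke Lemma \ref{lcil}(ii) (rescaled: the same homogeneity argument gives $\big(\r^{-(Q+2+2p)}\int_{\H_\r^-}|R|^p\big)^{\ip}\le \tilde C_2\big((\sigma/2)^{-(Q+2+2p)}\int_{\H_{\sigma/2}^-}|R|^p\big)^{\ip}$ for $\r<\sigma/2$, by the change of variables $\delta_{\sigma/2}$), and for the $P_*$-part I use $\|P_*\|$ on the unit cylinder together with the bound $|c_{\sigma/2}-c_\sigma|\le 2\cdot2^{(Q+2)/p}\tilde\omega(f,\sigma)$, exactly as in \eqref{cest}. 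Then on $\H_{\sigma/2}^-$ I bound $\big((\sigma/2)^{-(Q+2+2p)}\int_{\H_{\sigma/2}^-}|P_{\sigma/2}-P_\sigma|^p\big)^{\ip}$ by adding and subtracting $u$: it is at most $2^{(Q+2+2p)/p}\hat N(u,\sigma)+\hat N(u,\sigma/2)$. Collecting, each consecutive difference is controlled by $C\big(\hat N(u,2^{-j}r)+\hat N(u,2^{-(j+1)}r)+\tilde\omega(f,2^{-j}r)\big)$ with $C=C(p,Q)$.

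\textbf{Assembling the sum.} Summing over $j=0,\dots,k-1$ and adding the leading $\hat N(u,\r)$ term gives
\begin{equation*}
\left(\frac{1}{\r^{Q+2+2p}}\int_{\H_\r^-}|u-P_0|^p\right)^{\ip}
\le C\sum_{j=0}^{k}\big(\hat N(u,2^{-j}r)+\tilde\omega(f,2^{-j}r)\big).
\end{equation*}
Each term of this dyadic sum is then converted to an integral in the standard way: since $2^{-(j+1)}r\le s\le 2^{-j}r$ has length $2^{-(j+1)}r$ and $\hat N(u,\cdot)$, $\tilde\omega(f,\cdot)$ are "almost monotone" by \eqref{nmon} (so that $\hat N(u,2^{-j}r)\le C_\gamma(\hat N(u,s)+\tilde\omega(f,s))$ and similarly for $\tilde\omega$, for $s$ in that dyadic block, choosing $\gamma=2$), we get
\begin{equation*}
\big(\hat N(u,2^{-j}r)+\tilde\omega(f,2^{-j}r)\big)
\le C\int_{2^{-(j+1)}r}^{2^{-j}r}\frac{\hat N(u,s)+\tilde\omega(f,s)}{s}\,ds,
\end{equation*}
and summing over $j$ yields $\int_0^{r}\le\int_0^{4\r}$ of the integrand. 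Absorbing constants gives the claim. The main obstacle is the bookkeeping in the polynomial-comparison step — tracking which cylinder each $L^p$ norm lives on and correctly using the two-sided polynomial estimates of Lemma \ref{lcil} together with the constant-comparison bound \eqref{cest} — since unlike Lemma \ref{large}, here the cylinders shrink, so Lemma \ref{lcil}(ii) (not (i)) is the relevant direction, and one must be careful that all the rescaling factors $2^{(Q+2+2p)/p}$ stay uniformly bounded over the dyadic range. A minor point to check is that $P_0\in\tilde\P$ (not merely in $\mathcal{P}_{c}$ for some $c$): this is automatic since each $P_{2^{-j}r}\in\mathcal{P}_{c_{2^{-j}r}}\subset\tilde\P$.
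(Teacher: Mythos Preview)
Your telescoping idea is natural, but the direction is wrong, and this creates two genuine gaps. First, your polynomial $P_0:=P_r$ depends on $\r$: you write $\r=2^{-k}r$ with $r\in[\tfrac12,1)$, so as $\r$ varies the base scale $r$ varies in $[\tfrac12,1)$ and with it the minimizing polynomial $P_r$. The lemma, however, asks for a \emph{single} $P_0$ valid for every $\r\in(0,\tfrac14)$. Second, and more fatally, your final inequality ``summing over $j$ yields $\int_0^{r}\le\int_0^{4\r}$'' is simply false: the dyadic sum $\sum_{j=0}^{k}$ produces an integral over $[\r/2,r]$ with $r\in[\tfrac12,1)$, and $\int_0^r\le\int_0^{4\r}$ would require $r\le 4\r$, which fails as soon as $\r<\tfrac18$. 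What your argument actually proves is a bound by $C\int_0^{1}(\hat N+\tilde\omega)/s\,ds$, which does not go to zero with $\r$ and hence cannot yield \eqref{est-thm}.

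The reason is structural: by anchoring $P_0$ at the \emph{largest} scale $r\approx 1$, the quantity $u-P_0$ on $\H_\r^-$ measures the total oscillation between scales $\r$ and $1$, which is $\int_\r^{1}$ of the integrand --- the wrong tail. To obtain $\int_0^{4\r}$ you must anchor $P_0$ at the \emph{smallest} scale, i.e.\ take $P_0$ as the limit of $P_s$ as $s\to 0^+$, and telescope \emph{downward} from $\r$ to $0$. This is precisely what the paper does, in a slick way: it rescales $v(x,t)=u(\delta_r(x,t))/r^2$, applies Lemma~\ref{large} (the large-cylinder estimate) to $v$ on $\H_\gamma^-$ with $\gamma=\r/r\ge 1$, changes variables back to obtain
\[
\Big(\r^{-(Q+2+2p)}\int_{\H_\r^-}|u-P_r|^p\Big)^{1/p}\le C_1\int_r^{4\r}\frac{\hat N(u,s)+\tilde\omega(f,s)}{s}\,ds,
\]
and then lets $r\to 0$, extracting (by compactness of the coefficients, since the $P_r$ stay bounded) a limit polynomial $P_0\in\tilde\P$ independent of $\r$. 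Your telescoping machinery can be made to work if you reverse it to go from $\r$ down to $2^{-j}\r$, show that $(P_{2^{-j}\r})_j$ is Cauchy in $\tilde\P$ (your consecutive-difference bound does exactly this), and set $P_0:=\lim_{j\to\infty}P_{2^{-j}\r}$ --- but you must then also check this limit does not depend on the starting $\r$.
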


\begin{proof}
We suppose that $u$ is a solution to $\L u =f$ in $\H_1^-$. Applying Lemma \ref{large} to a rescaled function 
\begin{equation*}
v(x,t) = \frac{u(\delta_{r}(x,t))}{r^2} 
\end{equation*}
it follows that for $r\leq \frac{1}{4\gamma}$, with $\gamma \geq 1$
\begin{equation*}
\left(\dfrac{1}{\gamma^{Q+2+2p}}\int_{\H_\gamma^-}|v-P^v|^p\right)^\ip\leq C_1\int_1^{4\gamma} \frac{\hat{N}(v,s)+\tilde{\omega}(f,s)}{s}ds
\end{equation*}
where $P^v$ realises the infimum in the definition of $\hat{N}(v,1)$. Now performing a change of variables with $\r = \gamma r$, we infer 
\begin{equation}\label{lemlim}
\left(\dfrac{1}{\r^{Q+2+2p}}\int_{\H_\r^-}|u-P_r|^p\right)^\ip\leq C_1\int_r^{4\r} \frac{\hat{N}(u,s)+\tilde{\omega}(f,s)}{s}ds,
\end{equation}
where we notice that $P^v(x,t) = \frac{P_r(\delta_r(x,t))}{r^2}$ and $\hat{N}(v,s) = \hat{N}(u,rs)$.
Hence, fixing $\r \in (0,1/4)$, we may pass to the limit in \eqref{lemlim} for $r \to 0$. Therefore up to extracting a subsequence, we can assume that $P_r$ tends to a polynomial $P_0 \in \tilde{\P} $, namely
\begin{equation*}
\left(\dfrac{1}{\r^{Q+2+2p}}\int_{\H_\r^-}|u-P_0|^p\right)^\ip\leq C_1\int_0^{4\r} \frac{\hat{N}(u,s)+\tilde{\omega}(f,s)}{s}ds,
\end{equation*}
which concludes the proof.
\end{proof}

We notice that, in Lemma \ref{leminterm} and the upcoming Proposition, we have $P_0$ belonging to the set $\tilde{\P}$. Hence, in the proof of assertion $(iii)$ of Theorem \ref{mainthm} it is only left to show that $P_0$ belongs in particular to $\P$ (i.e. $\L P_0 =0$) in order to prove \eqref{est-thm}.

\begin{proposition}(Modulus of continuity of the solution up to second order) \label{propcont}\\
Let us assume that $\tilde{\omega}$ is Dini continuous and let us set $\beta=\ln \mu/\ln \lambda$. There exist a polynomial $P_0 \in \tilde{\P}$ and a constant $C'=C'(\underline{C},\lambda,\mu)$ such that for every $\r \in (0,\frac{\r^*}{4})$, we have
\begin{align}\nonumber
&\left(\dfrac{1}{\r^{Q+2+2p}}\int_{\H_\r^-}|u-P_0|^p\right)^\ip \\
&\quad \leq \label{estfin}
\underline{C} \, \frac{1}{\beta}\left\lbrace \left(\frac{{4\r}}{\lambda}\right)^{\beta} \left(\hat{N}(1)+\tilde{\omega}(1)\right) +\underline{C}'\left(\int_0^{4\r} \frac{\tilde{\omega}(r)}{r}dr + \r^{\beta}\int^1_{4\r}\frac{\tilde{\omega}(r)}{r^{1+\beta}}dr \right)\right\rbrace.
\end{align}
\end{proposition}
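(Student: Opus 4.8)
The plan is to combine the two preliminary ingredients already at hand, namely the small-cylinder estimate of Lemma~\ref{leminterm} and the Dini estimate of Proposition~\ref{dini-proposition}, keeping in mind the \textbf{Remark} following Proposition~\ref{dini-proposition} which says that the pair $(\hat N, \tilde\omega)$ (equivalently $\underline N,\underline\omega$) satisfies the hypotheses \eqref{dini-decay} and \eqref{dini-mon} needed there. Concretely, I would first invoke Lemma~\ref{leminterm} to obtain the polynomial $P_0 \in \tilde{\P}$ and the bound
\begin{equation*}
\left(\frac{1}{\r^{Q+2+2p}}\int_{\H_\r^-}|u-P_0|^p\right)^{\ip}\leq C_1 \int_0^{4\r}\frac{\hat{N}(r)+\tilde{\omega}(f,r)}{r}\,dr,
\end{equation*}
valid for every $\r \in (0,\tfrac14)$ (hence \emph{a fortiori} for $\r \in (0,\tfrac{\r^*}{4})$ once $\r^*\le 1$ is fixed). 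Then split the right-hand integral as $\int_0^{4\r}\frac{\hat N(r)}{r}dr + \int_0^{4\r}\frac{\tilde\omega(f,r)}{r}dr$.

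Next I would estimate the first term $\int_0^{4\r}\frac{\hat N(r)}{r}dr$ by applying Proposition~\ref{dini-proposition} with the identifications $N=\hat N$, $\omega=\tilde\omega$, and the constant $\underline C$ furnished by Proposition~\ref{decayprop} (for \eqref{dini-decay}) and by \eqref{nmon} with $\gamma=1/\lambda$ (for \eqref{dini-mon}); this is legitimate precisely by the Remark after Proposition~\ref{dini-proposition}. That yields
\begin{equation*}
\int_0^{4\r}\frac{\hat N(r)}{r}dr \le \underline C\,\frac1\beta\left\{\left(\frac{4\r}{\lambda}\right)^\beta\!\big(\hat N(1)+\tilde\omega(1)\big) + \underline C'\left(\int_0^{4\r}\frac{\tilde\omega(r)}{r}dr + \r^\beta\!\int_{4\r}^1\frac{\tilde\omega(r)}{r^{1+\beta}}dr\right)\right\},
\end{equation*}
with $\beta=\ln\mu/\ln\lambda$ and $\underline C' = \frac1\mu\frac1{(1-\lambda)\lambda^\beta}$. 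The residual term $\int_0^{4\r}\frac{\tilde\omega(f,r)}{r}dr$ is already of the form appearing on the right of \eqref{estfin} (it is dominated by, and can simply be absorbed into, the $\int_0^{4\r}\frac{\tilde\omega(r)}{r}dr$ summand), so adding it back costs only an adjustment of the constant. Renaming $C' = C'(\underline C,\lambda,\mu)$ to absorb $C_1$, the factors $\underline C/\beta$, and the combinatorial constants produces exactly \eqref{estfin}.

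There is one bookkeeping point that deserves care: Proposition~\ref{dini-proposition} requires the restriction $\r \in (0,\tfrac{\lambda}{4})$, whereas the statement of Proposition~\ref{propcont} allows $\r \in (0,\tfrac{\r^*}{4})$. I would simply set $\r^* := \lambda$ (or any value $\le \lambda$), so that the ranges match and the Dini estimate applies verbatim; this is consistent with the eventual choice of $r_*$ in Theorem~\ref{mainthm}. The only genuine subtlety — and the step I expect to be the main obstacle — is verifying that the hypotheses of Proposition~\ref{dini-proposition} are met by $(\hat N,\tilde\omega)$ with uniform constants, i.e.\ that the $\lambda,\mu,\underline C$ coming out of the blow-up argument in Proposition~\ref{decayprop} are the same ones feeding Proposition~\ref{dini-proposition}; everything else is a direct concatenation of two already-proved inequalities followed by a merge of error terms and a relabelling of constants. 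Note also that, as remarked just before the statement, $P_0$ is here only known to lie in $\tilde{\P}$; upgrading it to $P_0 \in \P$ (i.e.\ $\L P_0 = 0$) is deferred to the proof of Theorem~\ref{mainthm}(iii) and is not needed for \eqref{estfin}.
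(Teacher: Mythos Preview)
Your proposal is correct and follows essentially the same approach as the paper: the paper's proof simply states that \eqref{estfin} follows from combining Proposition~\ref{dini-proposition} (applied with $N\equiv\hat N$ and $\omega\equiv\tilde\omega$) with Lemma~\ref{leminterm}, and then reabsorbing the modulus of continuity term on the right-hand side of \eqref{dini-est}. Your handling of the range of $\r$ and the remark that $P_0$ is only known to lie in $\tilde\P$ at this stage are also in line with the paper.
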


\begin{proof}
The proof simply follows from the combination of Proposition \ref{dini-proposition} with $N\equiv\hat{N}$ and $\omega\equiv \tilde{\omega}$ and Lemma \ref{leminterm}, where we remark that we reabsorbed the modulus of continuity in the right hand side of \eqref{dini-est}.
\end{proof}

\begin{remark}
We observe that Proposition \ref{propcont} holds, more generally, for two functions $N$ and $\omega$ satisfying the assumptions of Proposition \ref{dini-proposition}.
\end{remark}

\subsection{Proof of Theorem \ref{mainthm} }
\label{sectionproof}
We start proving assertion $(i)$ of Theorem \ref{mainthm}. 
We first observe that from definitions \eqref{ncap} and \eqref{ntilde}, it holds that
\begin{equation}\label{nsp}
\tilde{N}(u,r)\leq \hat{N}(u,r).
\end{equation}
The right hand side of \eqref{nsp} can be estimated combining \eqref{max-est} with \eqref{def-C_1}, which yields for $r\in (0,\lambda]$
\begin{equation*}
\underline{N}(r) \leq \underline{C}\left( \underline{N}(1)+\frac{1}{\mu}\sup_{\r\in(0,1]} \underline{\omega}(\r)\right),
\end{equation*}
where we recall that $\underline{N}$ and $\underline{\omega}$ were defined respectively in \eqref{nbar} and \eqref{ombar}.
Moreover owing to the motonicity-type estimate \eqref{nmon} for $\gamma =\frac{1}{\lambda}$ and $r\in(0,1]$
\begin{equation*}
\hat{N}(u,r) \leq C \left( \hat{N}(u,1)+\sup_{\r\in(0,1]} \tilde{\omega}(f,\r)\right),
\end{equation*}
with $C = C(\lambda,\mu, p, Q)$. On the other hand, from definition \eqref{ncap} we get
\begin{equation*}
\hat{N}(u,1) \leq C\left(\|u\|_{L^p(\H_1^-)} + \|f\|_{L^p(\H_1^-)}\right).
\end{equation*}
Therefore, combining the estimates above, we conclude the proof of statement $(i)$.
\newline
Assertion $(ii)$ follows directly from estimate \eqref{max-est}.\\
We now focus on proving $(iii)$. We first observe that, from Proposition \ref{dini-proposition}, it follows that if $\tilde{\omega}(f,\cdot)$ is Dini, then $\hat{N}(u,\cdot)$ is Dini.
Moreover we recall that we have already proved estimate \eqref{est-thm} in the case where $P_0 \in \tilde{\P}$, according to Proposition \ref{propcont} and namely to \eqref{estfin}. Furthermore, we notice that the coefficients of $P_0$ are bounded by choosing $\r = \frac{\lambda}{4}$ in \eqref{estfin}.\\
Therefore it is only left to show that $P_0$ belongs in particular to $\P$, i.e. $P_0$ satisfies equation $\L P_0 =0$.\\
To this end, we define the function
\begin{equation*}
u^\varepsilon (x,t) = \frac{u(\delta_\varepsilon(x,t))-P_0(\delta_\varepsilon(x,t))}{\varepsilon^2}
\end{equation*}
which converges in $L^p$ to a function $v\equiv 0$ by \eqref{estfin} for $\varepsilon \to 0$. Moreover from 
\begin{align*}
\L(u^\varepsilon)&=\varepsilon^2 \dfrac{\L u(\delta_\varepsilon(x,t))}{\varepsilon^2}-\varepsilon^2\dfrac{\L P_0(\delta_\varepsilon(x,t))}{\varepsilon^2} \\
&= f(\delta_\varepsilon(x,t))-\L P_0(\delta_\varepsilon(x,t))
\end{align*}
and according to \eqref{intpar}, it follows that for $\varepsilon \to 0$,
\begin{align*}
0 = \L v = f(0)-\L P_0.
\end{align*}
Since by assumption $f(0) =0$, then we have showed that $P_0$ satisfies equation $\L P_0 =0$. This concludes the proof.

\medskip
\smallskip

\noindent\textbf{Acknowledgements}\\
The authors would like to thank Prof. Sergio Polidoro and Prof. Bianca Stroffolini for suggesting the problem and for useful comments.

%\bibliography{bib}

\end{document}